\newcommand\blfootnote[1]{%
  \begingroup
  \renewcommand\thefootnote{}\footnote{#1}%
  \addtocounter{footnote}{-1}%
  \endgroup
}
\newtheorem{thm}{Theorem}[section]
\newtheorem{cor}[thm]{Corollary}
\newtheorem{lem}[thm]{Lemma}
\newtheorem{conj}[thm]{Conjecture}
\theoremstyle{definition}
\theoremstyle{remark}
\let\c@equation\c@thm
\numberwithin{equation}{section}
\def\subsection{\@startsection{subsection}{3}%
  \z@{.5\linespacing\@plus.7\linespacing}{.1\linespacing}%
  {\bfseries}}
\newcommand{\N} { \mathbb{N}}
\newcommand{\Z} { \mathbb{Z}}
\renewcommand{\th} { \text{th}}
\newcommand{\cut} { \backslash}
\newcommand{\Lon} { \mbox{Lon}}
\newcommand{\Soc} { \mbox{Soc}}
\newcommand{\isom}{\cong}
\newcommand{\Cay} { \mbox{Cay}}
\DeclareMathOperator{\IR}{IR}
\DeclareMathOperator{\ir}{ir}
\DeclareMathOperator{\pn}{pn}
\DeclareMathOperator{\rep}{rep}
\@date \else {\vskip3ex \centering\footnotesize\@date\par\vskip1ex}\fi
\else \@footnotetext{\@setdate}\fi}
\title{Domination Parameters of the Unitary Cayley Graph of $\mathbb{Z}/n\mathbb{Z}$}
\author{Amanda Burcroff}
\date{ \vspace{-0.3in}{\it University of Michigan\\ 500 S. State St.\\  Ann Arbor, MI 48109 United States}}
\begin{document}
\maketitle
\begin{abstract}
    \vspace{-0.5in}
    The unitary Cayley graph of $\mathbb{Z}/n\mathbb{Z}$, denoted $X_n$, is the graph on  $\{0,\dots,n-1\}$ where vertices $a$ and $b$ are adjacent if and only if $\gcd(a-b,n) = 1$.   We answer a question of Defant and Iyer by constructing a family of infinitely many integers $n$ such that $\gamma_t(X_n) \leq g(n) - 2$, where $\gamma_t$ denotes the total domination number and $g$ denotes the Jacobsthal function.  We determine the irredundance number, domination number, and lower independence number of certain direct products of complete graphs and give bounds for these parameters for any direct product of complete graphs.  We provide upper bounds on the size of irredundant sets in direct products of balanced, complete multipartite graphs which are asymptotically correct for the unitary Cayley graphs of integers with a bounded smallest prime factor.
\end{abstract}

\blfootnote{{\it E-mail address:} burcroff@umich.edu}
\vspace{-0.5in}
\section{Introduction}
For a group $\Gamma$ and a set $S = S^{-1} \subseteq \Gamma$ not containing the identity element, the {\it Cayley graph} $\Cay(\Gamma;S)$ is the undirected graph with vertices labeled by $\Gamma$ and edge set $\{\{a,b\} : a -b \in S\}$.  If $\Gamma$ is a commmutative ring with unity, the {\it unitary Cayley graph} $X_\Gamma$ is the Cayley graph $\Cay(\Gamma,U_\Gamma)$, where $U_\Gamma$ is the set of units in $\Gamma$.  More information on Cayley graphs and unitary Cayley graphs can be found in the algebraic graph theory texts by Biggs \cite{Big} and by Godsil and Royle \cite{GR}.  We are interested in $X_n = X_{\Z/n\Z}$, that is, the graph on $\{0,\dots,n-1\}$ where vertices $a$ and $b$ are connected by an edge if and only if $\gcd(a-b,n) = 1$.   Observe that $X_n$ is vertex-transitive and regular of degree $\phi(n)$, where $\phi$ denotes the Euler totient function.  

These graphs were cast into the limelight in 1989, when Erd{\H o}s and Evans showed that every finite simple graph $G$ is isomorphic to an induced subgraph of $X_n$ for some positive integer $n$, in which case they say $G$ is {\it representable modulo $n$}.  The {\it representation number} $\rep(G)$ of a graph $G$ is the minimum positive integer such that $G$ is representable modulo $n$.  The representation numbers of many classes of graphs have been determined \cite{Akh, AEP, AEP2, Eva, EIN, NU}.
See Section 7.6 of \cite{Gal} for a survey of representation numbers and additional references.  

In this paper, we consider domination parameters of the unitary Cayley graph of $\Z/n\Z$, including variants of the domination, irredundance, and independence numbers.  Recall that a set $S \subseteq V(G)$ is called {\it independent} if it contains no pair of adjacent vertices.  A set $S \subseteq V(G)$ is called {\it dominating} if every vertex of $G$ is either contained in $S$ or adjacent to a vertex of $S$.  A set $S \subseteq V(G)$ is called {\it irredundant} if for each $v \in S$, either $v$ is isolated in $S$ or $v$ has a neighbor $u\not\in S$ such that $u$ is not adjacent to any vertex of $S \cut \{v\}$.   These notions yield the following graph parameters for a graph $G$.

\begin{itemize}
\item The {\it irredundance number} $\ir(G)$ is the minimum size of a maximal irredundant set.
\item The {\it domination number} $\gamma(G)$ is the minimum size of a dominating set.
\item The {\it lower independence number} $i(G)$, also known as the {\it independent domination number}, is the minimum size of a maximal independent set.
\item The {\it independence number} $\alpha(G)$ is the maximum size of an independent set.
\item The {\it upper domination number} $\Gamma(G)$ is the maximum size of a minimal dominating set.
\item The {\it upper irredundance number} $\IR(G)$ is the maximum size of an irredundant set.
\end{itemize}

For any graph $G$, we have the following chain of inequalities, known as the {\it domination chain}:
$$\ir(G) \leq \gamma(G) \leq i(G) \leq \alpha(G) \leq \Gamma(G) \leq \IR(G).$$
Hundreds of papers have been written showing that some inequalities in this chain are equalities for certain classes of graphs, see Section 3.5 of \cite{HHS} for more details.  Many of these results were unified in 1994, when Cheston and Fricke showed that $\alpha(G) = \IR(G)$ for any strongly perfect graph $G$ \cite{CF}.

It has been shown in \cite{ABJJKKP} that the unitary Cayley graph of any finite commutative ring is a direct product of balanced, complete multipartite graphs, so we will often work in this more general setting.  The {\it direct product} (sometimes called the {\it tensor product} or {\it Kronecker product}) of two graphs $G$ and $H$, denoted $G \times H$ (alternatively, $G \otimes H$), has vertex set $V(G) \times V(H)$ with $(g_1,h_1)$ adjacent to $(g_2,h_2)$ if and only if $g_1$ is adjacent to $g_2$ in $G$ and $h_1$ is adjacent to $h_2$ in $H$.  Throughout this paper, $\prod_{i = 1}^t G_i$ will denote the direct product of the graphs $G_1,\dots,G_t$.  The {\it balanced, complete $b$-partite graph with partite set size $a$}, denoted $K[a,b]$, is the graph on $ab$ vertices partitioned into $b$ partite sets of size $a$ such that two vertices are adjacent if and only if they lie in different partite sets. If $p$ is a prime and $\alpha$ is a positive integer, note that $X_{p^\alpha} \isom K[p^{\alpha-1},p]$.  An application of the Chinese remainder theorem shows that for an integer $n$ with prime factorization $n = p_1^{\alpha_1}p_2^{\alpha_2} \cdots p_t^{\alpha_t}$, we have $X_n \isom \prod_{i = 1}^t X_{p_i^{\alpha_i}} \isom \prod_{i = 1}^t K[p_i^{\alpha_i - 1}, p_i]$.

In this paper, we build upon the work of Defant and Iyer \cite{DI} to determine domination parameters of the unitary Cayley graphs of  $\Z/n\Z$.  Let $g(n)$ denote the minimum positive integer $m$ such that every set of $m$ consecutive integers contains an integer which is coprime to $n$; this arithmetic function is known as the {\it Jacobsthal function}.  The {\it total domination number} of a graph $G$ is the minimum size of a set $S$ in $G$ such that every vertex is adjacent to a member of $S$.  Defant observed in \cite{Def} that there exist integers $n$ such that $\gamma_t(X_n) \leq  g(n) - 1$.  In Section \ref{Dom Sets}, we answer two questions of Defant and Iyer in the positive.  The first asks whether there exists a single integer $n$ such that $\gamma_t(X_n) \leq g(n) - 2$, and the second asks whether there exist integers $n$ with arbitrarily many distinct prime factors such that $\gamma(X_n) \leq g(n) - 2$.  We construct integers $n$ with arbitrarily many distinct prime factors such that $X_n$ contains a dominating cycle of size $g(n)-2$; this answers both questions of Defant and Iyer since a dominating cycle is necessarily a total dominating set. 

In Section \ref{Low Dom}, we provide bounds on the irredundance, domination, and lower independence numbers of direct products of complete graphs and determine these parameters in certain cases. One application of this work is the construction of some infinite families of integers $n$ where $\ir(X_n) = \gamma(X_n) = i(X_n)$. We provide an upper bound on the lower independence number of $X_n$ which disproves a claim of Uma Maheswari and Maheswari \cite{MahMah}.  Defant and Iyer \cite {DI} recently determined the value of $\gamma(\prod_{i = 1}^4 K_{n_i})$ in several cases; we compute this parameter in all cases. Lastly, in Section \ref{Upp Dom}, we provide upper bounds on the sizes of irredundant sets in direct products of balanced, complete multipartite graphs.  In the case of unitary Cayley graphs of $\Z/n\Z$, Theorem \ref{up dom bd 1} yields the following bound.

\setcounter{thm}{4}
\setcounter{section}{5}
\begin{cor}
Let $n = p_1^{\alpha_1}p_2^{\alpha_2}\cdots p_t^{\alpha_t}$, where $p_1 < \cdots < p_t$.  Then 
$$\IR(X_n) \leq \left(1 + 2 \cdot \frac{p_1}{p_t}\cdot \frac{1}{p_1^{\alpha_1-1}p_2^{\alpha_2-1}\cdots p_t^{\alpha_t - 1}}\right)\alpha(X_n).$$
\end{cor}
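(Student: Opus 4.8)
The plan is to obtain this bound as a direct specialization of Theorem \ref{up dom bd 1}, the general upper bound on irredundant sets in a direct product of balanced, complete multipartite graphs. The key input is the decomposition recorded in the introduction: by the Chinese remainder theorem, $X_n \isom \prod_{i=1}^t K[p_i^{\alpha_i-1}, p_i]$. Thus I would apply Theorem \ref{up dom bd 1} to this product, taking the $i$-th factor $K[a_i, b_i]$ to have partite-set size $a_i = p_i^{\alpha_i-1}$ and number of partite sets $b_i = p_i$. Since $p_1 < \cdots < p_t$, the number of partite sets is smallest in the first factor and largest in the last, so $\min_i b_i = p_1$ and $\max_i b_i = p_t$; also $\prod_{i=1}^t a_i = p_1^{\alpha_1-1}p_2^{\alpha_2-1}\cdots p_t^{\alpha_t-1}$. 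These are exactly the quantities appearing in the claimed multiplicative factor.

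After this identification the argument is essentially bookkeeping: I would substitute $a_i = p_i^{\alpha_i-1}$, $b_i = p_i$, $\min_i b_i = p_1$, and $\max_i b_i = p_t$ into the bound of Theorem \ref{up dom bd 1}, whose correction term is governed by the ratio of the extreme values of the $b_i$ and the product of the $a_i$. To keep the bound phrased in terms of $\alpha(X_n)$ it is worth recording the independence number explicitly: fixing the first coordinate to a single partite class of the factor $K[p_1^{\alpha_1-1}, p_1]$ with the fewest partite sets, and letting the remaining coordinates range freely, produces an independent set (any two such vertices have non-adjacent first coordinates, hence are never adjacent) of size $p_1^{\alpha_1-1}\prod_{j\neq 1} p_j^{\alpha_j} = n/p_1$, and this is known to be optimal, so $\alpha(X_n) = n/p_1$.

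The one algebraic identity doing any work is $p_1^{\alpha_1-1}\cdots p_t^{\alpha_t-1} = n/(p_1 p_2\cdots p_t)$, which converts between the absolute correction term and a multiple of $\alpha(X_n) = n/p_1$; concretely it yields
\[
\frac{2\,p_1 p_2\cdots p_{t-1}}{n/p_1} \;=\; 2\cdot\frac{p_1}{p_t}\cdot\frac{p_1 p_2 \cdots p_t}{n} \;=\; 2\cdot\frac{p_1}{p_t}\cdot\frac{1}{p_1^{\alpha_1-1}p_2^{\alpha_2-1}\cdots p_t^{\alpha_t-1}},
\]
matching the stated factor exactly. Because $X_n$ is by construction a direct product of balanced, complete multipartite graphs, none of the hypotheses of Theorem \ref{up dom bd 1} require separate verification, so there is no genuine obstacle in this corollary; the only point demanding care is the translation between the arithmetic data $p_i,\alpha_i,n$ and the graph-theoretic parameters $a_i,b_i$, together with correctly matching the extremal factors $p_1$ and $p_t$ to $\min_i b_i$ and $\max_i b_i$.
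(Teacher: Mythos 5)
Your proposal is correct and matches the paper's (implicit) proof exactly: the corollary is stated as an immediate specialization of Theorem \ref{up dom bd 1} to $X_n \isom \prod_{i=1}^t K[p_i^{\alpha_i-1},p_i]$, with $a_i = p_i^{\alpha_i-1}$, $b_i = p_i$, $\alpha(X_n) = n/p_1$, and the same algebraic conversion of the additive term $\frac{2}{b_t}\prod_{i=1}^t b_i = 2p_1\cdots p_{t-1}$ into the stated multiple of $\alpha(X_n)$. Your identification of the graph parameters and the identity $p_1\cdots p_t/n = 1/(p_1^{\alpha_1-1}\cdots p_t^{\alpha_t-1})$ are precisely the bookkeeping the paper leaves to the reader.
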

\setcounter{thm}{1}
\setcounter{section}{1}

\section{Preliminaries}
A graph $G$ is a set of vertices $V(G)$ along with a set of undirected edges $E(G)$, excluding loops.  For any $U \subseteq V(G)$, the subgraph of $G$ induced by $U$, denoted $G[U]$, is the graph with vertex set $U$ and whose edge set is precisely the edge set $E(G)$ restricted to $U \times U$.  

Let $N(v)$ denote the neighborhood of a vertex $v$, the set of all vertices adjacent to $v$.  Let $N[v]$ denote the {\it closed neighborhood} of a vertex $v$, that is, the set $N(v)$ along with the vertex $v$ itself.  For $S \subseteq V(G)$, let $N[S] = \bigcup_{v \in S} N[v]$.  A vertex $u \in V(G)$ is a {\it private neighbor} of a vertex $v \in S \subseteq V(G)$ if $u \in (N[v] \cut N[S \cut \{v\}])$.  Note $u$ can equal $v$.  Let $\pn[v] = \pn[v;S]$ denote the set of private neighbors of $v \in S$.  Let $\pn[S] = \bigcup_{v \in S} \pn[v;S]$.  Note that a set $S \subseteq V(G)$ is irredundant if and only if every $v \in S$ has a private neighbor, and $S \subseteq V(G)$ is dominating if and only if $N[S] = V(G)$.  

Let $\N = \{1,2,3,\dots\}$. For $n \in \N$, let $\omega(n)$ denote the number of distinct prime factors of $n$.  For $S \subseteq \N$, let $\omega(S) = \{\omega(n) : n \in S\}$.

\section{Dominating Cycles in the Unitary Cayley Graphs of $\Z/n\Z$}\label{Dom Sets}

It was shown by Maheswari and Manjuri  \cite{MahMan} that the value of the Jacobsthal function $g(n)$ is an upper bound for the domination number of $X_n$, the unitary Cayley graph of $\Z/n\Z$.  Defant and Iyer \cite{DI} note that the stronger inequality $\gamma_t(X_n) \leq g(n)$ holds and that these quantities can differ by $1$ for $n$ with arbitrarily many distinct prime factors.

We consider a variation of the domination number, introduced by Veldman \cite{Vel} in 1983.  The {\it cycle domination number} of a graph $G$, denoted $\gamma_c(G)$, is the minimum size of a dominating cycle in $G$, provided that such a cycle exists.  Note that $\gamma_c(G) \geq \gamma_t(G) \geq \gamma(G)$ for any graph $G$.  Since $(0,1,\dots,n-1)$ is a cycle in $X_n$, the cycle domination number of the unitary Cayley graph of $\Z/n\Z$ exists for all $n \in \N$.

In the following theorem, we exhibit an infinite family of integers $n$ such that $g(n) - \gamma_t(X_n) \geq g(n) - \gamma_c(X_n) \geq 2$. Let $M_{c,j}$  be the set of positive integers $n$ such that $g(n) - \gamma_c(X_{\Z/n\Z}) \geq j$. Similarly, let $M_{t,j}$  be the set of positive integers $n$ such that $g(n) - \gamma_t(X_{\Z/n\Z}) \geq j$. 

\begin{thm}\label{cycle dom}
The set $\omega(M_{c,2})$ is unbounded.
\end{thm}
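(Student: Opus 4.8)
The plan is to prove the theorem by an explicit construction: for each target value $k$, I would build an integer $n$ with $\omega(n) \geq k$ that admits a dominating cycle of size exactly $g(n) - 2$. Since a dominating cycle is in particular a total dominating set, producing such a cycle gives $\gamma_c(X_n) \leq g(n) - 2$, hence $g(n) - \gamma_c(X_n) \geq 2$ and $n \in M_{c,2}$; arranging $\omega(n) \geq k$ for every $k$ then shows $\omega(M_{c,2})$ is unbounded.

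The first step is to reformulate cycle domination arithmetically. Writing $U$ for the set of units of $\Z/n\Z$, a cyclic sequence $(v_0,\dots,v_{K-1})$ of distinct residues is a cycle in $X_n$ precisely when each cyclically consecutive difference $v_{i+1}-v_i$ lies in $U$, and its underlying set $S=\{v_0,\dots,v_{K-1}\}$ is totally dominating precisely when $S+U=\Z/n\Z$, because $v$ is dominated by $s$ exactly when $v-s\in U$ and $0\notin U$. Since adjacency and the covering condition $S+U=\Z/n\Z$ depend only on residues modulo $\rad(n)=p_1\cdots p_t$, the isomorphism $X_n \isom \prod_{i=1}^t K[p_i^{\alpha_i-1},p_i]$ lets me rephrase domination as a covering statement in $\prod_{i=1}^t \Z/p_i\Z$: for every residue vector $(w_1,\dots,w_t)$ there must be some $s\in S$ with $s\not\equiv w_i \pmod{p_i}$ for all $i$.

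The second step is a two-sided estimate. For a lower bound on $g(n)$, I would use the Chinese remainder theorem to force a block of $L$ consecutive integers each divisible by some prime factor of $n$; any block of $L$ consecutive non-units gives $g(n)\geq L+1$. For the upper bound I would exhibit an explicit cyclic dominating sequence of length $K\leq L-1$, so that $g(n)-\gamma_c(X_n)\geq (L+1)-(L-1)=2$. The natural starting point is a window of consecutive integers: it is automatically a path in $X_n$ since consecutive integers differ by $1\in U$, and it dominates once its length reaches $g(n)$. The task is then to delete two vertices from such a window while preserving domination and closing it into a cycle, which I would arrange by choosing the deleted endpoints to be coprime to $n$ so that the two residues that close the cycle differ by a unit.

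The main obstacle is precisely this shaving. A run of $g(n)-1$ consecutive non-units, which exists by the definition of $g$, shows that naively deleting the two endpoints of the window destroys domination, since the residues whose only cover came from an endpoint become exposed. The construction must therefore build in redundancy so that two deletions can be absorbed: I would engineer the factorization so the window endpoints are multiply covered, exploit the symmetry $U=-U$ to fold the two endpoints into the interior, and use the additional prime factors to re-cover any exposed residues without enlarging $S$. I expect the real difficulty to be carrying this out uniformly as $\omega(n)$ grows — most naturally through a recursive family that adjoins one new prime at a time (recall $g$ is non-decreasing under adjoining prime factors), chosen large enough to lengthen the forced run, and hence raise $g(n)$, while only mildly tightening the covering condition, so that the gap of $2$ between $g(n)$ and the cycle length persists for arbitrarily many prime factors.
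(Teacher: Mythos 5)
Your proposal is a strategy outline rather than a proof, and the step you explicitly defer is precisely the entire content of the theorem. The reformulation of cycle domination as a covering condition modulo the prime divisors of $n$ is correct but routine, and the lower-bound half (force a run of non-units via the Chinese remainder theorem) is standard. What is missing is any working mechanism for the upper bound: you correctly observe that one cannot just delete two endpoints from a dominating window of consecutive integers, since the run of $g(n)-1$ consecutive non-units exposes vertices whose only cover was an endpoint, but your proposed remedies (``engineer the factorization so the window endpoints are multiply covered,'' ``fold the two endpoints into the interior,'' ``adjoin one new prime at a time'') are not constructions --- no choice of $n$ is given, no candidate cycle is written down, and no argument is offered that the exposed vertices can be re-covered without increasing the set size back to $g(n)-1$ or beyond. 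Indeed there is an inherent tension your plan never resolves: making $g(n)$ large forces long runs of non-units, and those same runs are what make two deletions fatal; closing this gap requires a quantitative idea, not just redundancy in the abstract.

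For comparison, the paper's construction does \emph{not} shave a dominating window. It takes $n = 6q\prod_{j=1}^k p_j$ with $q \equiv 1 \pmod 3$ prime, $k = \frac{2q-2}{3}$, and primes $p_j > 2q+10$, and uses the set $D = \{0,1,\dots,2q+3\} \cup \{y,z\}$ of size $2q+6$, where $y$ and $z$ are tailored by the Chinese remainder theorem ($y \equiv 0 \bmod 2$, $y\equiv 2 \bmod 3$, $y \equiv -1$ modulo $q$ and each $p_j$; similarly for $z$). The heart of the proof is a counting argument: if $x$ is undominated by the consecutive block, then all of $x, x-1,\dots,x-(2q+3)$ are non-units, and comparing $|B(2,3)| = \frac{4q+8}{3}$, $|B(q)| \leq 3$, $\sum_i |B(p_i)| \leq \frac{2q-2}{3}$ against $2q+4$ forces the multiples of $q$ and the multiples of $2$ or $3$ in this range to be essentially disjoint; this in turn pins $x$ to the residues $2$ or $4 \pmod 6$, which are exactly the classes covered by $y$ and $z$. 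Separately, $g(n) \geq 2q+8$ is proved by an exact assignment of the prime divisors to $2q+7$ consecutive positions (this is why $k$ must equal $\frac{2q-2}{3}$ --- the count of positions not killed by $2$, $3$, or $q$ matches the number of available primes), and the cycle is closed as $(0,1,y,z,2,3,\dots,2q+3)$. The number of distinct prime factors is $k+3$, which grows with $q$, giving unboundedness of $\omega(M_{c,2})$ in one parametrized family rather than by adjoining primes recursively. Without this (or some comparable) arithmetic mechanism, your proposal does not establish the theorem.
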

\begin{proof}
\vspace{-0.5cm}
Let $q$ be a prime such that $q \equiv 1 \mod 3$.  Let $k = \frac{2q - 2}{3}$, and let $p_1,\dots,p_k$ be primes such that $2q + 10 < p_1 < \dots < p_k$.  Let $n = 6q\prod_{j=1}^k p_j$.  

We begin by showing $g(n) \geq 2q + 8$.  For $i \in \{0,\dots,2q + 6\}$, let 
\[a_i =\begin{cases} 
2 & \text{if } i \equiv 0 \mod 2\\
3 & \text{if } i \equiv 1 \mod 6\\
q & \text{if } i = 3 \text{ or } i = 2q + 3\\
\min(\{p_1,\dots,p_k\} \cut \{a_0,\dots,a_{i-1}\}) & \text{otherwise.}
\end{cases}
\]

Let $L(x) = \{i \in \{0,\dots,2q + 6\} : a_i = x\}$. Note that $L(2)| = q + 4$, $|L(3)| = \frac{q + 5}{3}$, and $|L(q)| = 2$, and $|L(p_j)| = 1$ for each $j \in \{1,\dots,k\}$.  By the Chinese remainder theorem, there exists a $z \in \Z$ such that $z \equiv -i \mod a_i$ for all $i$.  The set $\{z + i : 0 \leq i \leq 2q + 6\}$ is a set of $2q + 7$ integers, none of which are relatively prime to $n$.  Hence $g(n) \geq 2q + 8$.

Let $y$ be the unique vertex of $X_{\Z/n\Z}$ such that $y \equiv 0 \mod 2$, $y \equiv 2 \mod 3$, $y \equiv -1 \mod q$, and $y \equiv -1 \mod p_i$ for all $1 \leq i \leq k$. Let $z$ be the unique vertex of $X_{\Z/n\Z}$ such that $z \equiv 1 \mod 2$, $z \equiv 0 \mod 3$, $z \equiv -2 \mod q$, and $z \equiv -2 \mod p_i$ for all $1 \leq i \leq k$. Let $D = \{0,1,\dots,2q + 3,y,z\}$.  We will show that the vertices of $D$ form a cycle dominating set of $X_{\Z/n\Z}$.  Since $|D| = 2q + 6 \leq  g(n)-2$, this will prove $n \in M_{c,2}$.  

Suppose a vertex $x$ is not adjacent to any element of $D \cut \{y,z\}$.  We will show that $x$ is adjacent to either $y$ or $z$.  The set $S = \{x,x-1,x-2,\dots,x-(2q + 3)\}$ consists of $2q + 4$ consecutive integers, none of which are coprime to $n$.  For $r \in \N$, let $B(r) = \{s \in S : s \equiv 0 \mod r\}$.  Let $B(2,3) = \{s \in S : s \equiv 0 \mod 2 \text{ or } s \equiv 0 \mod 3\}$.  Observe that $|B(2,3)| = \frac{2}{3}(2q + 4) = \frac{4q + 8}{3}$, $|B(q)| \leq 3$, and $|B(p_i)|\leq 1$ for all $p_i$.  Since 
$$S = B(2,3) \cup B(q) \cup \bigcup_{i = 1}^{k} B(p_i),$$
we have
\begin{align*}
    2 q + 4 &= \left|B(2,3) \cup B(q) \cup \bigcup_{i = 1}^{k} B(p_i)\right| \leq |B(2,3)| + |B(q)| + \sum_{i = 1}^k |B(p_i)|\\
    &\leq \frac{4q + 8}{3} + 3 + \frac{2q-2}{3} = 2q + 5.
\end{align*}

This calculation implies that $|B(q) \cut B(2,3)| \geq 2$. Let $\ell$ denote the minimum nonnegative integer such that $x - \ell \in B(q)$.  If $\ell > 3$, then one element of $\{x - \ell, x - \ell - q\} = B(q)$ is even, hence $|B(q) \cut B(2,3)| < 2$.  Therefore,  we must have $\ell \leq 3$ with $B(q) \cap B(2,3) = \{x - \ell - q\}$.  Hence the sets $B(2,3)$, $B(q) \cut \{x - \ell - q\}$, and $\bigcup_{i = 1}^{k} B(p_i)$ are disjoint.
 
Thus, exactly one of $x,x-1,x-2,x-3$ is contained in $B(q)$.  In particular, $x \not\equiv -1,-2 \mod q$.  For each $1 \leq i \leq k$, from the fact that $|B(p_i)| = 1$ and the assumption that $p_i \geq 2q + 10$, we can conclude that $x \not\equiv -1,-2 \mod p_i$.

If $x$ is not adjacent to $y$, then either $x \equiv 2 \mod 3$ or $x \equiv 0 \mod 2$.  We will show that, under these conditions, $x \equiv 2 \mod 6$ or $x \equiv 4 \mod 6$.  Since $z \equiv 3 \mod 6$, this is enough to show that $x$ is adjacent to $z$.

Suppose $x \equiv 2 \mod 3$.  
\begin{itemize}
    \item If $x \equiv 0 \mod 2$, then $x \equiv 2 \mod 6$.
    \item If $x \equiv 1 \mod 2$, then $x \equiv 5 \mod 6$.  Thus $x-1,x-2,x-3 \in B(2,3)$, so $x \in B(q)$.  Hence $x - 2q \in B(q)$.  However, $x - 2q \equiv 2 - 2 \equiv 0 \mod 3$, contradicting the disjointedness of $B(2,3)$ and $B(q) \cut \{x - \ell - q\}$.
\end{itemize}
Now suppose $x \not\equiv 2 \mod 3$ and $x \equiv 0 \mod 2$.
\begin{itemize}
    \item If $x \equiv 1 \mod 3$, then $x \equiv 4 \mod 6$. 
    \item If $x \equiv 0 \mod 3$, then $x \equiv 0 \mod 6$.  Thus $x,x-2, x-3 \in B(2,3)$.  Hence $x - 1 \in B(q)$.  However, $(x - 1) - 2q \equiv 2 - 2 \equiv 0 \mod 3$, contradicting the disjointedness of $B(2,3)$ and $B(q) \cut \{x - \ell - q\}$.
\end{itemize}
Thus, if $x$ is not adjacent to $y$, then $x \equiv 4 \mod 6$ or $x \equiv 2 \mod 6$.  We can conclude that $D$ is indeed a total dominating set, and we have 
$$\gamma_t(X_{\Z/n\Z}) \leq 2q + 6 = (2q + 8) - 2 \leq g(n) - 2.$$

Lastly, note that $y$ is adjacent to $1$, $z$ is adjacent to $2$, and $y$ is adjacent to $z$.  Therefore, $(0,1,y,z,2,3,\dots,2q + 3)$ is a dominating cycle.
\end{proof}

We will briefly expand upon the motivation behind the construction in Theorem \ref{cycle dom}.  Fix an integer $d$ and a prime $q$.  Let $[n]_d$ denote the smallest nonnegative integer equivalent to $n$ modulo $d$.  Let $R_{q,d}$ be the set of integers $x \in \{0,\dots,d-1\}$ such that $x$ and $[x-2q]_d$ are relatively prime to $d$.  Let $R_{q,d,k} = \{x + \ell : x\in R_{q,d} \text{ and }0 \leq \ell \leq k-1\}$. A key property used in both the construction of integers $n$ for which $\gamma_t(X_n) \leq g(n) - 1$ by Defant and Iyer \cite{DI} (using $d = 2$ and $k = 2$) and the construction in Theorem \ref{cycle dom} (using $d = 6$ and $k = 4$) is that $R_{q,d,k}$ can be covered by relatively few vertices in $X_d$, namely $1$ vertex in \cite{DI} and $2$ in Theorem \ref{cycle dom}.  Note that $k$ is the minimum integer such that, for $x$ not adjacent the consecutive vertices of the constructed dominating set, one of $x,x-1,\dots,x-(k-1)$ is divisible by $q$.  If other such triples of integers $(q,d,k)$ can be found, similar constructions could yield other families of integers in $M_{c,2}$ and perhaps even $M_{c,j}$ for $j \geq 3$.

Budadoddi and Mallikarjuna Reddy claim in \cite{BR} that the cycle dominating number (see Section \ref{Dom Sets}) of $X_n$ is given by the Jacobsthal function $g(n)$, provided $n$ is neither a prime power nor twice a prime power.  Theorem \ref{cycle dom} shows that this is not the case; in fact, there are integers $n$ with $\omega(n)$ arbitrarily large such that $g(n) - \gamma_c(X_n) \geq 2$.  

In the family constructed in Theorem \ref{cycle dom}, we see that $\gamma_c(X_n) \leq g(n)$.  However, this inequality does not hold for all integers.  For example, it is easily seen that $\gamma_c(X_6) = 6$ while $g(6) = 4$.  We do not know if there exist infinitely many integers for which $\gamma_c(X_n) > g(n)$.  The construction of a dominating set in $X_n$ of size $g(n)$ by Manjuri and Maheswari \cite{MahMan} shows that $\gamma_c(X_n) \leq g(n)$ whenever $\gcd(n,g(n)) = 1$.

Theorem \ref{cycle dom} also answers two questions of Defant and Iyer from \cite{DI}, the first asking whether $M_{t,2}$ is nonempty and the second asking whether there exist integers $n$ with $\omega(n)$ arbitrarily large such that $\gamma(X_n) \leq g(n)-2$.  

\begin{cor}
There exist integers $n$ with arbitrarily many distinct prime factors such that $\gamma(X_n) \leq \gamma_t(X_n) \leq g(n) - 2$.
\end{cor}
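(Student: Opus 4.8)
The plan is to observe that the Corollary is an almost immediate consequence of Theorem \ref{cycle dom} together with the elementary inequalities relating the three domination parameters. Theorem \ref{cycle dom} establishes that $\omega(M_{c,2})$ is unbounded, meaning there exist integers $n$ with $\omega(n)$ arbitrarily large satisfying $g(n) - \gamma_c(X_n) \geq 2$, i.e.\ $\gamma_c(X_n) \leq g(n) - 2$. The whole task reduces to transferring this bound from $\gamma_c$ down to $\gamma_t$ and $\gamma$.

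First I would invoke the standard domination chain $\gamma(G) \leq \gamma_t(G) \leq \gamma_c(G)$, which holds for any graph $G$ admitting a dominating cycle and was already recorded in Section \ref{Dom Sets}. The key point is that a dominating cycle is in particular a total dominating set (every vertex on the cycle has a neighbor on the cycle, so every vertex of $G$ is adjacent to a cycle vertex), and a total dominating set is in particular a dominating set. Applying this to the family from Theorem \ref{cycle dom}, each such $n$ satisfies
$$\gamma(X_n) \leq \gamma_t(X_n) \leq \gamma_c(X_n) \leq g(n) - 2.$$
Since the proof of Theorem \ref{cycle dom} explicitly exhibits a dominating cycle of size $2q+6$ for $n = 6q\prod_{j=1}^k p_j$, I could alternatively just point to that cycle directly, noting that it simultaneously witnesses all three bounds without needing to separately reference $\gamma_c$.

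The final step is to confirm that the family realizes arbitrarily many distinct prime factors. In the construction, $n$ has exactly $k + 2 = \frac{2q-2}{3} + 2$ distinct prime factors (the primes $2$, $3$, $q$, and $p_1, \dots, p_k$). As $q$ ranges over primes congruent to $1 \bmod 3$, the quantity $k$ grows without bound, so $\omega(n)$ does as well. This matches exactly the statement that $\omega(M_{c,2})$ is unbounded and hence yields integers $n$ with arbitrarily many distinct prime factors satisfying the claimed chain of inequalities.

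There is no real obstacle here, since all the substantive work is done inside Theorem \ref{cycle dom}; the Corollary is essentially a repackaging. The only things to be careful about are (i) citing the direction of the domination chain correctly, recalling that $\gamma \leq \gamma_t \leq \gamma_c$ rather than the reverse, and (ii) making sure the reader sees that the single cycle constructed in the theorem does double duty as both a total dominating set and an ordinary dominating set, so that no new construction is required.
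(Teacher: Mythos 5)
Your proposal is correct and takes essentially the same route as the paper: the corollary is presented there as an immediate consequence of Theorem \ref{cycle dom}, using precisely the observation that the constructed dominating cycle is a total dominating set (hence a dominating set), i.e.\ $\gamma(X_n) \leq \gamma_t(X_n) \leq \gamma_c(X_n) \leq g(n) - 2$, with $\omega(n)$ unbounded over the family. One trivial slip: $n = 6q\prod_{j=1}^k p_j$ has $k+3$ distinct prime factors (namely $2$, $3$, $q$, and $p_1,\dots,p_k$), not $k+2$, but this does not affect the unboundedness conclusion.
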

This leads to the natural next question: does there exist a single integer such that $\gamma_t(X_n) \leq g(n) - 3$?

\section{Lower Domination Parameters in Products of Complete Graphs}\label{Low Dom}
In this section, we consider the quantities in the lower portion of the domination chain for products of complete graphs.  It is often useful to think of vertices in $\prod_{i = 1}^t K_{n_i}$ as $t$-tuples of integers where the $i^{\th}$-entry is in the range $\{0,\dots,n_i-1\}$, where two vertices are adjacent if and only if their corresponding vectors differ in every coordinate.  For squarefree positive integers $n$, we refer to vertices in $X_n$ as integers and tuples interchangeably.

\subsection{Irredundant Sets in Products of Complete Graphs}
We will make use of two previous results; the first from Defant and Iyer in \cite{DI}, and the second from Bollob{\'a}s and Cockayne, as well as Allan and Laskar, independently, in \cite{BC, AL}.
\begin{thm}\emph{(\cite{DI})}
Let $G = \prod_{i = 1}^t K_{n_i}$, where $2 \leq n_1 \leq n_2 \leq \cdots \leq n_t$, $t \geq 4$, and $n_2 \geq 3$.  We have
$$\gamma(G) \geq t + 1 + \left\lfloor \frac{t-1}{n_1 - 1} \right\rfloor$$ 
\end{thm}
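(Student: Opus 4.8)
The plan is to prove the contrapositive: assuming $S \subseteq V(G)$ is a dominating set, I derive the lower bound on $|S|$ by showing that if $|S|$ were too small, some vertex would escape domination. The first step is to reformulate domination combinatorially. Writing each vertex as a tuple $(x_1,\dots,x_t)$ with $x_i \in \{0,\dots,n_i-1\}$, two vertices are adjacent precisely when they differ in every coordinate, so a vertex $w$ is \emph{undominated} by $S$ exactly when $w \notin S$ and every $v \in S$ agrees with $w$ in at least one coordinate. Thus $S$ is dominating if and only if every tuple that meets each element of $S$ in some coordinate already lies in $S$. My goal becomes: if $|S| \leq t + \lfloor (t-1)/(n_1-1)\rfloor$, then I can exhibit a tuple $w \notin S$ meeting each $v \in S$ in a coordinate.

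The main tool is a \emph{covering pattern}: a choice of a coordinate subset $C \subseteq \{1,\dots,t\}$ together with values $(c_i)_{i \in C}$ such that every $v \in S$ satisfies $v_i = c_i$ for some $i \in C$. Any tuple $w$ with $w_i = c_i$ for all $i \in C$ then meets every element of $S$, so all $\prod_{i \notin C} n_i$ such tuples are undominated except those lying in $S$. Hence if I can produce a covering pattern with $|C| \leq t-1$, the free coordinates yield a combinatorial subcube of size at least $2$ inside the set of meeting tuples, and I only need this subcube to contain a point outside $S$. So the crux is to bound the minimum size of a covering pattern for $S$ from above in terms of $|S|$, $t$, and $n_1$.

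The quantitative heart exploits that coordinate $1$ has the fewest values: among the $|S|$ elements, by pigeonhole some value occurs in coordinate $1$ at least $\lceil |S|/n_1\rceil$ times, so a single pair $(1,c_1)$ covers at least that many elements at once. After removing them, the remaining elements can be covered one at a time using distinct coordinates from $\{2,\dots,t\}$, each contributing one pair to the pattern. Optimizing this greedy covering --- and iterating the collision idea in the small coordinates rather than spending coordinate $1$ only once --- is what produces the precise term $\lfloor (t-1)/(n_1-1)\rfloor$ while keeping $|C| \leq t-1$; the hypotheses $t \geq 4$ and $n_2 \geq 3$ are used to guarantee enough coordinates and values to absorb the leftover elements and to rule out small degenerate configurations.

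The main obstacle, and the step requiring the most care, is twofold. First, one must optimize the covering so that the bound is exactly $t+1+\lfloor(t-1)/(n_1-1)\rfloor$ rather than an off-by-a-constant estimate, which forces a careful accounting of how many elements each coordinate-value pair can absorb. Second, one must handle the boundary case where the covering pattern uses nearly all coordinates, so that the resulting subcube of meeting tuples is small and one must argue directly that it cannot be entirely contained in $S$. This second point is where the extra ``$+1$'' over a pure covering count arises, and it is the delicate part of matching the stated constant.
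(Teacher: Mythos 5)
This theorem is not proved in the paper at all: it is quoted from Defant and Iyer \cite{DI} as a known result, so your proposal can only be judged on its own merits, and as written it has a genuine gap. Your setup is sound and standard (it is essentially Meki{\v s}'s diagonal construction): a vertex $w$ is undominated precisely when $w \notin S$ and $w$ agrees with every element of $S$ in some coordinate, so one wants a ``covering pattern'' --- one value per chosen coordinate hitting every element of $S$ --- whose completions are not all in $S$. The problem is that the two steps you yourself flag as ``the crux'' and ``the delicate part'' are the entire content of the theorem, and you give no argument for either. Worse, the arithmetic shows your main mechanism (a pattern with $|C| \leq t-1$, hence a free coordinate and a subcube of size at least $2$) is unavailable at the critical size. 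Take $|S| = s = t + q$ with $q = \lfloor (t-1)/(n_1-1) \rfloor$. Pigeonhole in coordinate $1$ covers at least $\lceil s/n_1 \rceil$ elements, and one checks from $q \leq (t-1)/(n_1-1) < q+1$ that $\lceil s/n_1\rceil = q+1$ exactly when the values are spread evenly; the remaining $s - (q+1) = t-1$ elements then consume all of coordinates $2,\dots,t$, so $|C| = t$ for every value of $n_1$ (the formula $t + \lfloor (t-1)/(n_1-1)\rfloor$ is precisely calibrated to make the greedy use all $t$ coordinates). Moreover $|C| = t$ can be forced: if no two elements of $S$ agree in any coordinate outside the pigeonhole fiber and the first coordinates are balanced, every covering pattern uses all $t$ coordinates. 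So in the hard case you always end up with a single candidate vertex per pattern, which may well lie in $S$, and the proof must show that over all the combinatorial freedom (which value is pigeonholed, which elements it absorbs, which bijection assigns leftover elements to coordinates) not every candidate can lie in $S$. That counting argument is absent.

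This missing step is also exactly where the hypotheses $t \geq 4$ and $n_2 \geq 3$ must enter in an essential way, and your proposal only waves at them (``rule out small degenerate configurations''). They cannot be decorative: for $t = 3$ and $n_1 = 2$ the claimed bound would read $\gamma \geq 6$, yet the paper's Theorem 5.6 gives $i\left(K_2 \times K_{n_2} \times K_{n_3}\right) = 4$, so $\gamma \leq 4$ there; any correct proof must break for $t = 3$, and nothing in your outline does. Until the ``boundary case'' argument is actually carried out --- and it is a genuine case analysis, not a routine optimization of the greedy covering --- the proposal is a plausible strategy rather than a proof.
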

\begin{thm}\emph{(\cite{BC,AL})}
For any graph $G$, $\ir(G) \geq \frac{1}{2}(\gamma(G) + 1)$.
\end{thm}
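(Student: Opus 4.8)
The plan is to prove the equivalent inequality $\gamma(G) \leq 2\ir(G) - 1$ by taking a maximal irredundant set $S$ of minimum size, so that $|S| = \ir(G)$, and enlarging it to a dominating set using at most $|S| - 1$ additional vertices. Let $X = V(G) \cut N[S]$ be the set of vertices that $S$ fails to dominate. If $X = \emptyset$, then $S$ is already dominating and $\gamma(G) \leq \ir(G) \leq 2\ir(G) - 1$, so I would assume $X \neq \emptyset$ and focus on dominating $X$ cheaply.

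First I would exploit maximality. For every $x \in X$, the set $S \cup \{x\}$ is not irredundant. Since $x \notin N[S]$, the vertex $x$ is its own private neighbor in $S \cup \{x\}$, so the failure of irredundancy must occur at some $v \in S$; concretely $\pn[v;S] \subseteq N[x]$. Because $x \notin N[v]$, this in fact forces $\pn[v;S] \subseteq N(x)$, and in particular $v \notin \pn[v;S]$, so $v$ has a neighbor inside $S$ and hence an external private neighbor. Recording one such vertex $f(x) := v$, the crucial consequence is that any single private neighbor of $v$ is adjacent to every $x$ with $f(x) = v$; thus one external private neighbor per vertex of the image $f(X)$ simultaneously dominates its entire fiber.

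This yields the main construction. Let $D = S \cup E$, where $E$ consists of one external private neighbor of each vertex of $S$ that is not isolated in $G[S]$. A short check shows these external private neighbors are distinct and lie outside $S$, that $S$ dominates $N[S]$, and that $E$ dominates all of $X$ by the fiber property above. Writing $S_0$ for the set of vertices isolated in $G[S]$ (equivalently, those that are their own private neighbor), we get $|D| = 2|S| - |S_0|$, so $\gamma(G) \leq 2\ir(G) - |S_0|$. Whenever $S_0 \neq \emptyset$ this already delivers the desired bound.

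The main obstacle is the remaining case, in which $G[S]$ has no isolated vertex, so $|D| = 2|S|$ and I must still save a single vertex by showing that $D$ is not a minimal dominating set. If some $v \in f(X)$ has a unique private neighbor, then $v$ is redundant in $D$: its lone private neighbor lies in $D$ and covers itself, $v$ is covered by its neighbor inside $S$, and every other neighbor of $v$ is non-private and hence covered by $S \cut \{v\}$, so $v$ may be deleted. The delicate situation is when every relevant vertex of $S$ has at least two private neighbors; I expect to handle this by an exchange argument, substituting an undominated $x_0 \in X$ for $v_0 = f(x_0)$ to manufacture an isolated vertex in the set and reduce to the previous case, with the technical difficulty being to control the side effect that $x_0$ may simultaneously absorb the private neighbors of other vertices of $S$. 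This final step, rather than the routine construction of $D$, is where the sharp constant $-1$ (as opposed to the easy bound $\gamma(G) \leq 2\ir(G)$) genuinely costs effort.
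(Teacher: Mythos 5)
The paper never proves this statement itself; it quotes it from \cite{BC,AL}, so there is no in-paper argument to compare against, and your proposal must be judged on its own merits. Up to your final case it is correct: the reduction to $\gamma(G) \leq 2\ir(G)-1$, the observation that each undominated $x$ forces some $v = f(x) \in S$ with $\pn[v;S] \subseteq N(x)$ (hence $v$ is social and all of its private neighbors lie outside $S$), the dominating set $D = S \cup E$ of size $2|S| - |S_0|$, and the deletion of a used vertex having a unique private neighbor are all sound. The problem is that the remaining case (all of $S$ social, every relevant vertex with at least two private neighbors) is exactly where the theorem lives, and your sketch for it does not close.

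The exchange $S' = (S \setminus \{v_0\}) \cup \{x_0\}$ has two defects. The first is the one you name: $S'$ can fail to be irredundant if $x_0$ absorbs another vertex's private neighborhood. This one is repairable: if every undominated vertex had at least two absorbers, you could fix any $v_* \in S$ and drop $p(v_*)$ from $E$, getting a dominating set of size $2|S|-1$ at once; so you may assume some $x_0$ has a unique absorber $v_0$, and then $S'$ is indeed irredundant with $x_0$ isolated. The second defect is fatal to the sketch: $S'$ need not be a \emph{maximal} irredundant set, and maximality is precisely what all your earlier cases consume (it is what supplies an absorber for every vertex the set fails to dominate). Concretely, let $S = \{v_1,v_2\}$ with $v_1 \sim v_2$, give $v_i$ two private neighbors $p_i,p_i'$, join $\{p_1,p_1'\}$ to $\{p_2,p_2'\}$ completely, and attach $y_i$ adjacent exactly to $p_i,p_i'$. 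Then $S$ is a maximal irredundant set realizing your hard case ($\ir = 2$ since no vertex is adjacent to all others; both $v_i$ social and used, each with two private neighbors, and $y_i$ has unique absorber $v_i$), yet the exchanged set $\{y_1,v_2\}$ is irredundant but \emph{not} maximal, because $\{y_1,v_2,p_2\}$ is still irredundant; so ``reduce to the previous case'' is unavailable. The classical way to close this case uses a move absent from your outline: apply the maximality of $S$ not to an undominated vertex but to the chosen private neighbor $p_0 \in \pn[v_0;S]$. Since $p_0$ is adjacent to the undominated $x_0$, we have $N[p_0] \not\subseteq N[S]$, so the failure of irredundance of $S \cup \{p_0\}$ must occur at some $x' \in S$ with $\pn[x';S] \subseteq N[p_0]$. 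Then $(S \setminus \{x'\}) \cup \{p(v) : v \in S\}$, with the choice $p(v_0) = p_0$, is a dominating set of size $2|S|-1$ whether or not $x' = v_0$: every undominated vertex is caught by $p(v)$ for any of its absorbers $v$, the set $S \setminus \{x'\}$ covers $N[S \setminus \{x'\}]$ and the social vertex $x'$, and $\pn[x';S]$ is covered by $p_0$.
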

\begin{thm}\label{dom numbers}
Let $G = \prod_{i = 1}^t K_{n_i}$, where $2 \leq n_1 \leq n_2 \leq \cdots \leq n_t$.  If $t = 1$, then $\ir(G) = 1$.  If $t = 2$, then
$$\ir(G) = \begin{cases} 2, & \text{if } n_1 = 2;\\ 3, & \text{if } n_1 \geq 3.\end{cases}$$
If $t = 3$, then $\ir(G) = 4$.  For $t \geq 4$, we have $\ir(G) \geq \frac{1}{2}\left( t + \left\lfloor \frac{t-1}{n_1 - 1}\right\rfloor\right) + 1$.
\end{thm}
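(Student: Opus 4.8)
The plan is to treat the four ranges of $t$ separately, establishing matching bounds in each. The case $t=1$ is immediate: $G=K_{n_1}$ is complete, so all vertices share a closed neighborhood, whence a singleton $\{v\}$ is irredundant ($v$ is its own private neighbor) and maximal (adjoining any $w$ leaves $N[v]\setminus N[w]=\emptyset$), giving $\ir(G)=1$. For $t\geq4$ with $n_2\geq3$ the stated bound is formal: combining the Defant--Iyer lower bound $\gamma(G)\geq t+1+\lfloor\frac{t-1}{n_1-1}\rfloor$ with the inequality $\ir(G)\geq\frac12(\gamma(G)+1)$ yields $\ir(G)\geq\frac12(t+\lfloor\frac{t-1}{n_1-1}\rfloor)+1$. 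The configuration $t\geq4$, $n_1=n_2=2$ falls outside the Defant--Iyer hypothesis, so I would dispatch it by the observation that $K_2\times K_2\cong 2K_2$ and that the direct product distributes over disjoint unions; thus $G$ is two disjoint copies of $K_2\times\prod_{i=3}^t K_{n_i}$, a product of $t-1$ complete graphs, and since a set is maximal irredundant in a disjoint union exactly when its restriction to each component is, $\ir(G)=2\,\ir(K_2\times\prod_{i=3}^t K_{n_i})$, to which one applies the theorem for fewer factors by induction; the result comfortably exceeds the claimed $t+\tfrac12$.

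For $t=2$ I would prove the exact values by exhibiting extremal sets and excluding smaller ones. For the upper bounds I use minimal dominating sets, which are always maximal irredundant: when $n_1=2$ the non-adjacent pair $\{(0,0),(1,0)\}$ satisfies $N[(0,0)]\cup N[(1,0)]=V(G)$ and is minimal, so $\ir(G)\leq2$; when $n_1\geq3$ the diagonal clique $\{(0,0),(1,1),(2,2)\}$ is dominating and minimal (each center privately dominates a coordinate-sharing vertex missed by the other two), so $\ir(G)\leq3$. For the lower bounds I argue that nothing smaller is maximal: a singleton always extends by one of its non-neighbors (the graph is not complete), giving $\ir(G)\geq2$; and when $n_1\geq3$ every irredundant pair extends, a non-adjacent pair by a third vertex sharing its common coordinate value (available since $n_1,n_2\geq3$) and an adjacent pair $\{(0,0),(1,1)\}$ by $(2,2)$, giving $\ir(G)\geq3$.

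The case $t=3$ is where the genuine work lies, since the domination chain is too weak here (already because $\gamma$ need not be large, so $\ir\geq\frac12(\gamma+1)$ cannot force $\ir\geq4$). For the lower bound I would show that every irredundant set $S$ with $|S|\leq3$ is non-maximal. The basic tool is an extension lemma: fixing one private neighbor $p(v)$ of each $v\in S$, any vertex $w\notin S$ that shares a coordinate with every element of $S\cup\{p(v):v\in S\}$ may be adjoined, for then $w$ is non-adjacent to all of $S$ (hence self-private) while each $p(v)$ survives. When the $n_i$ are small this set of at most $2|S|\leq6$ points is readily covered by a single vertex's coordinate-agreements; the difficulty is large $n_i$, where the points sit in "general position" and no vertex is simultaneously non-adjacent to all of $S$ and all chosen private neighbors. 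There one must instead adjoin a vertex carrying its own \emph{external} private neighbor and re-select the private neighbors of the members of $S$. I would organize this by the adjacency pattern of $S$ (independent triple, one edge, a path, or a triangle): triangles extend by continuing a diagonal or, in the tight case $n_1=n_2=n_3=3$, by a rainbow vertex; the other patterns extend along shared coordinates after handling the degenerate small-$n_i$ subcases by hand.

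The upper bound for $t=3$ is the subtlest point and I expect it to be the main obstacle. For $n_1\geq4$ it is clean: the length-four diagonal $\{(j,j,j):0\leq j\leq3\}$ is a minimal dominating set (any $x$ omits one of the values $0,1,2,3$ and hence differs everywhere from the corresponding center, and each center privately dominates a transversal using the remaining three values), so $\ir(G)\leq4$. The trouble is $n_1\in\{2,3\}$, where a size-four dominating set may not exist and the extremal maximal irredundant set is in general \emph{not} dominating, so its maximality cannot be read off from domination and must be verified against every possible addition. The naive candidates fail here: any independent or nearly independent quadruple can be enlarged by a further coordinate-sharing vertex, so the extremal set must be rigid, with members forced into adjacencies whose private neighbors are tightly pinned and such that every undominated vertex destroys some member's only surviving private neighbor. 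Producing such a configuration uniformly for $n_1\leq n_2\leq n_3$ with $n_1\in\{2,3\}$, certifying its maximality by a finite local check, and matching it against the lower-bound case analysis is the crux of the $t=3$ argument.
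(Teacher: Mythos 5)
Your handling of $t=1$, $t=2$, and $t\geq 4$ is correct and runs essentially parallel to the paper: the paper obtains the upper bounds for $t\leq 3$ from Meki{\v s}'s computation of $\gamma$ (via $\ir\leq\gamma$) where you build explicit minimal dominating sets, its $t=2$ extension arguments match yours, and for $t\geq 4$ it makes exactly your formal combination of the Defant--Iyer bound with $\ir(G)\geq\frac12(\gamma(G)+1)$; your disjoint-union reduction for $n_1=n_2=2$ (a case the quoted Defant--Iyer theorem does not cover, since it assumes $n_2\geq 3$) is a point of care the paper passes over silently. The genuine gap is the $t=3$ case, and it is a gap in both directions. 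For the upper bound you certify $\ir(G)\leq 4$ only when $n_1\geq 4$, and you declare $n_1\in\{2,3\}$ the unresolved ``crux'' on the premise that a size-four dominating set may fail to exist there and that the extremal maximal irredundant set must be some rigid non-dominating configuration. That premise is false. The even-weight Boolean vectors $E=\{(0,0,0),(1,0,1),(1,1,0),(0,1,1)\}$ form a maximal independent (hence dominating) set for \emph{all} $n_1,n_2,n_3\geq 2$: independence is clear since any two members agree in a coordinate, and if some $(x,y,z)$ were adjacent to no member with, say, $x\notin\{0,1\}$, the four non-adjacency conditions would reduce to ($y=0$ or $z=0$), ($y=0$ or $z=1$), ($y=1$ or $z=0$), ($y=1$ or $z=1$), which cannot all hold; so $x,y,z\in\{0,1\}$, and every odd-weight Boolean vector is adjacent to its even-weight complement in $E$. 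By the domination chain $\ir(G)\leq\gamma(G)\leq i(G)$, this single set gives $\ir(G)\leq 4$ uniformly --- it is the same set the paper exhibits later in Theorem \ref{ind dom number} --- so the obstacle you identify as the main difficulty dissolves once this construction is found.

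For the lower bound at $t=3$, what you have is a plan rather than a proof, and the unexecuted part is precisely where the paper's effort is concentrated. The easy cases (three vertices sharing a value in some coordinate, or an independent triple, which extends by the rainbow vertex $(x_3,y_2,z_1)$) are fine. But the load-bearing case is a non-independent triple, without loss of generality $S=\{(0,0,0),(1,1,1),(0,y_3,z_3)\}$, in general position: there the paper assumes neither $(0,0,1)$ nor $(0,1,0)$ can be adjoined, deduces that each of $N[(0,0,1)]$ and $N[(0,1,0)]$ must then contain the \emph{entire} private-neighbor set of some member of $S$, and kills each alternative by exhibiting either an explicit irredundant extension or an escaped private neighbor such as $(0,0,2)\in\pn[(1,1,1)]\cut N[(0,0,1)]$, eventually forcing $(0,y_3,z_3)=(0,2,2)$ and the extension $\{(0,0,0),(1,1,1),(0,2,2),(1,1,0)\}$. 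Your sketch (``adjoin a vertex carrying its own external private neighbor and re-select the private neighbors'') names the right manoeuvre but never shows such a vertex exists, and organizing the analysis by the adjacency pattern inside $S$ alone will not close it, because the contradiction comes from private-neighborhood containments, not from the edge pattern of $S$. As written, the equality $\ir(G)=4$ for $t=3$ remains unproven in your proposal.
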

\begin{proof}
\vspace{-0.1cm}
As $\ir(G) \leq \gamma(G)$, the calculation of the domination number for $G = \prod_{i = 1}^t K_{n_i}$ for $t \leq 3$ by Meki{\v s} \cite{Mek} proves that these irredundance numbers are at most the stated values. The case $t=1$ is trivial, since any single vertex is a maximal irredundant set.  

Let $t = 2$ and $n_1 = 2$.  Fix an irredundant set $\{(x_1,y_1)\}$.  We claim this set is not a maximal irredundant set.  This follows from the fact that $\{(x_1,y_1),(1-x_1,y_1)\}$ is also irredundant, as each vertex is its own private neighbor.  Therefore, $\ir(G) \geq 2$.

Let $t = 2$, and suppose $n_1 \geq 3$. Clearly no vertex of $G$ is dominating, so $\ir(G) > 1$.  Fix an irredundant set $S = \{(x_1,y_1),(x_2,y_2)\}$.  Suppose these two vertices are equal in some coordinate, without loss of generality $x_1 = x_2$.  Fix $y_3 \in \{1,2,3\}\cut\{y_1,y_2\}$.  The set $\{(x_1,y_1),(x_2,y_2), (x_1,y_3)\}$ is an independent hence irredundant set.  Thus, we can assume $x_1 \not= x_2$ and $y_1 \not= y_2$.  Let $S' = \{(x_1,y_1),(x_2,y_2),(x_1,y_2)\}$.  Fix $z_1 \in \{1,2,3\} \cut \{x_1,x_2\}$ and $z_2 \in \{1,2,3\}\cut\{y_1,y_2\}$.  Then $(z_1,y_2)$ is a private neighbor of $(x_1,y_1)$ in $S'$, $(x_1,z_2)$ is a private neighbor of $(x_2,y_2)$ in $S'$, and $(x_1,y_2)$ is its own private neighbor in $S'$.  Therefore, the minimum size of a maximal irredundant set in $G$ is at least $3$. 

Let $t = 3$.  Suppose, seeking a contradiction, that 
$$S = \{(x_1,y_1,z_1), (x_2,y_2,z_2), (x_3,y_3,z_3)\}$$ 
is a maximal irredundant set in $G$.  We can assume these three vertices are not all equal in any coordinate, otherwise $S$ can be extended to an independent set of size $4$ by taking a fourth vector which is also equal in that coordinate.  

Suppose that $S$ is independent; without loss of generality assume $x_1 = x_2 \not= x_3$, $y_1 = y_3 \not= y_2$, and $z_2 = z_3 \not= z_1$.  Note that if $n_3 = 2$, every irredundant set is independent.  The point $(x_3,y_2,z_1)$ is not in $S$, and we have that $S \cup \{(x_3,y_2,z_1)\}$ is independent.  This contradicts that $S$ is a maximal irredundant set.  

Thus, we can assume $S$ is not independent; without loss of generality assume 
$$S = \{(0,0,0),(1,1,1),(x_3,y_3,z_3)\}.$$
If $(x_3,y_3,z_3) = (2,2,2)$, then $S \cup \{(0,1,2)\}$ is irredundant, contradicting the maximality of $S$.  Thus, we can assume $x_3 = 0$.

Since $S$ is a maximal irredundant set, we cannot add $(0,0,1)$ or $(0,1,0)$ to $S$ without removing the irredundance property.  Since the set $\{(0,0,0),(1,1,1),(0,0,1),(0,1,0)\}$ is irredundant, we have that $(x_3,y_3,z_3) \not= (0,0,1), (0,1,0)$.  Suppose, seeking a contradiction, that we cannot add either $(0,0,1)$ or $(0,1,0)$.  
Since neither $(0,0,1)$ nor $(0,1,0)$ is contained in $N[S]$, each of $N[(0,0,1)]$ and $N[(0,1,0)]$ must contain at least one of $\pn[(0,0,0)]$, $\pn[(1,1,1)]$, or $\pn[(0,y_3,z_3)]$.  

If $N[(0,0,1)] \supseteq \pn[(0,0,0)]$, we must have  $y_3 = 0$ or $1$, lest $(1,y_3,1) \in \pn[(0,0,0)] \cut N[(0,0,1)]$.  If $y_3 = 1$, then $z_3 = 1$ since we have shown $(0,1,0)\not\in S$.  However, the set $S \cup \{(1,0,1)\}$ is irredundant.  Hence $y_3 = 0$.  This also leads to a contradiction, as $(0,0,0) \not= (0,y_3,z_3)$ and $(0,0,1)\not\in S$.   If $N[(0,0,1)] \supseteq \pn[(1,1,1)]$, then we also reach a contradiction, as $x_3 \geq 3$ implies $(0,0,2) \in \pn[(1,1,1)] \cut N[(0,0,1)]$.  Therefore $N[(0,0,1)] \supseteq \pn[(0,y_3,z_3)]$.  

By permuting coordinates, we can similarly show that $N[(0,1,0)] \supseteq \pn[(0,y_3,z_3)]$.  Hence $N[(0,0,1)]\cap N[(0,1,0)] \supseteq \pn[(0,y_3,z_3)]$.  It is straightforward to check that under this condition $y_3,z_3\not\in \{0,1\}$.  Without loss of generality we can assume $(0,y_3,z_3) = (0,2,2)$.  However, we reach our final contradiction from the fact that $S$ can be extended to the irredundant set $\{(0,0,0),(1,1,1),(0,2,2),(1,1,0)\}$.
\end{proof}

\subsection{Dominating Sets in Products of Complete Graphs}
We provide an upper bound on the domination number of any product of $t$ complete graphs which is exponential in $t$.  This is  an improvement on the upper bound yielded by a theorem of Bre{\v s}ar, Klav{\v z}ar, and Rall \cite{BKR}, stating that $\gamma(G \times H) \leq 3 \gamma(G)\gamma(H)$ for any graphs $G$ and $H$.  This implies $\gamma\left(\prod_{i=1}^t K_{n_i}\right) \leq 3^{t-1}$.  We show that $\gamma\left(\prod_{i=1}^t K_{n_i}\right) \leq 3 \cdot 2^{t-2}$.

\begin{thm}\label{dom code}
Let $G = \prod_{i = 1}^t K_{n_i}$.  Let $M$ be a family of vertices in $\{0,1\}^t \subseteq V(G)$ such that no two vertices in $M$ are equal in $t-1$ coordinates or different in all $t$ coordinates.  Then $\{0,1\}^t \cut M$ is a dominating set for $G$.
\end{thm}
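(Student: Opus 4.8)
The plan is to verify the domination condition directly: I will show that every vertex $w = (w_1,\dots,w_t) \in V(G)$ either lies in $S \defeq \{0,1\}^t \cut M$ or has a neighbor in $S$. The crucial structural observation is that, since two vertices of $G$ are adjacent exactly when they differ in every coordinate, the set of $\{0,1\}$-vertices adjacent to a fixed $w$ is an affine subcube of $\{0,1\}^t$: a coordinate $u_i$ is forced to equal $1 - w_i$ when $w_i \in \{0,1\}$, and is free to be either $0$ or $1$ when $w_i \geq 2$ (since then both values differ from $w_i$). Thus the free coordinates of this subcube are precisely those $i$ with $w_i \geq 2$, and its dimension equals the number of such coordinates.

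I would then split into two cases according to whether $w \in \{0,1\}^t$. If $w \in \{0,1\}^t$ and $w \notin M$, then $w \in S$ and we are done. If $w \in \{0,1\}^t$ and $w \in M$, the neighbor-subcube is the single vertex $\overline{w}$ (the coordinatewise complement), which differs from $w$ in all $t$ coordinates; the hypothesis that $M$ contains no two vertices differing in all $t$ coordinates forces $\overline{w} \notin M$, so $\overline{w} \in S$ dominates $w$. If instead $w \notin \{0,1\}^t$, then some $w_{i^*} \geq 2$, so the neighbor-subcube has dimension at least one. Within it I can choose two vertices $u, u'$ agreeing in every free coordinate except $i^*$; these are equal in exactly $t-1$ coordinates, so by the first hypothesis on $M$ at least one of them lies in $S$, and that vertex dominates $w$.

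The content of the argument is simply the recognition that the two forbidden configurations in $M$ match exactly the two obstructions to domination: a zero-dimensional neighbor-subcube (arising only for $w \in \{0,1\}^t$, guarded by the ``complementary pair'' condition) and, for a positive-dimensional neighbor-subcube, the possibility that an edge of the cube lies entirely in $M$ (guarded by the ``Hamming-distance-one'' condition). I do not expect a genuine obstacle here: once the subcube description is in place, both cases are immediate, and the only mild care needed is to confirm that every vertex of the neighbor-subcube is indeed adjacent to $w$ (it differs from $w$ in the forced coordinates by construction, and in the free coordinates because there $w_i \geq 2 > 1 \geq u_i$).
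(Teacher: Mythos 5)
Your proof is correct and follows essentially the same argument as the paper: for a vertex in $\{0,1\}^t$ you use the complementary pair (guarded by the ``differ in all $t$ coordinates'' condition), and for a vertex with some coordinate $\geq 2$ you use two $\{0,1\}$-neighbors differing in only that coordinate (guarded by the ``equal in $t-1$ coordinates'' condition). The affine-subcube framing is a nice packaging, but the two case analyses are identical in substance to the paper's proof.
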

\begin{proof}
Let $D = \{0,1\}^t \cut M$.  Suppose $v \in \{0,1\}^t$.  By the requirement that no two vertices in $M$ differ in all coordinates, at least one of $v$ and its Boolean complement $(1,\dots,1) - v$ is in $D$.  Hence $v$ is dominated by $D$.  

Suppose $u \in V(G) \cut \{0,1\}^t$.  Let $\ell \in \{1,\dots,t\}$ be a coordinate in which $u$ is neither $0$ nor $1$.  Observe that there exist two vertices $w_1$ and $w_2$ in $\{0,1\}^t$, differing in only the $\ell^{\text{th}}$ coordinate, which differ from $u$ in every coordinate.  This implies $w_1$ and $w_2$ are both adjacent to $u$ in $G$.  Since $w_1$ and $w_2$ are equal in $t-1$ coordinates, at least one of them is in $D$.  Therefore, $u$ is dominated by $D$.  We conclude that $D$ is a dominating set of $G$.
\end{proof}

Let $A(t,d,t-1)$ denote the maximum number of binary vectors of length $t$ such that any two distinct vectors have Hamming distance between $d$ and $t-1$, inclusive.  Let $A(t,d)$ denote the maximum number of binary vectors such that no two vectors have Hamming distance less than $d$.  By taking a set witnessing $A(t,d)$ and throwing out one of the vectors in any pair of Boolean complements, we obtain the following bound.

A classic result in coding theory is the Gilbert-Varshamov lower bound \cite{Gil,Var} on $A(t,d)$, originally stated for alphabets of prime power size.

\begin{thm}\emph{(\cite{Gil,Var})}\label{GV bound}
If $k$ satisfies
$$2^k < \frac{2^t}{\sum_{j = 0}^{d - 2} {t \choose j}},$$
then $A(t,d) \geq 2^k$.
\end{thm}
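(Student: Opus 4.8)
The plan is to prove the stronger, Varshamov-type statement that under the stated hypothesis there exists a binary \emph{linear} code of length $t$, dimension at least $k$, and minimum distance at least $d$. Such a code has at least $2^k$ codewords whose pairwise Hamming distances are all at least $d$, which is exactly what is needed to conclude $A(t,d) \geq 2^k$. I would realize the code through its parity-check matrix. Set $r = t-k$ and recall the standard dictionary: for $H \in \mathbb{F}_2^{r \times t}$, the null space $C = \{c \in \mathbb{F}_2^t : Hc = 0\}$ is a linear code whose minimum distance equals the smallest size of a linearly dependent set of columns of $H$. Thus it suffices to exhibit $t$ columns $h_1,\dots,h_t \in \mathbb{F}_2^r$ such that every set of at most $d-1$ of them is linearly independent; then $C$ has minimum distance at least $d$, and since $\dim C = t - \rank H \geq t - r = k$, we obtain $A(t,d) \geq |C| \geq 2^k$.

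First I would build the columns greedily. Suppose $h_1,\dots,h_{i-1}$ have already been chosen so that every at most $d-1$ of them are linearly independent. Adjoining a new column $h_i$ preserves this property precisely when $h_i$ is \emph{not} equal to any $\mathbb{F}_2$-linear combination of at most $d-2$ of the previously chosen columns: such a relation would produce a dependent set of size at most $d-1$ containing $h_i$, and conversely any violation of the independence condition at step $i$ must involve $h_i$ and hence yield such a relation. The number of vectors forbidden at this step, counting the empty combination (the zero vector), is therefore at most $\sum_{j=0}^{d-2}\binom{i-1}{j} \leq \sum_{j=0}^{d-2}\binom{t}{j}$.

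The decisive step is the hypothesis itself: the assumed inequality $2^k < 2^t / \sum_{j=0}^{d-2}\binom{t}{j}$ rearranges to $\sum_{j=0}^{d-2}\binom{t}{j} < 2^{t-k} = 2^r$, which is exactly the number of vectors in $\mathbb{F}_2^r$. Hence at every stage the forbidden set is a proper subset of $\mathbb{F}_2^r$, so some admissible column $h_i$ always remains, and all $t$ columns can be placed. This produces the required matrix $H$ and completes the argument.

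I expect the main difficulty to be bookkeeping rather than conceptual: one must be careful that the independence-preserving condition involves combinations of at most $d-2$ (not $d-1$) earlier columns and that the forbidden count includes the zero vector, so that the exponent works out to the sharper sum $\sum_{j=0}^{d-2}$ rather than the weaker sphere-covering sum $\sum_{j=0}^{d-1}$ that a direct greedy codeword argument would give. A secondary point worth noting is that $H$ need not have full row rank for the conclusion to hold: if $\rank H < r$, the null space $C$ is only larger, which can only increase $|C|$, so the bound $A(t,d) \geq 2^k$ remains valid.
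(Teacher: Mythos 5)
Your proof is correct. The paper itself gives no proof of this statement --- it is quoted as a classical result with a citation to Gilbert and Varshamov --- and your argument is precisely the standard Varshamov proof from that literature: build a parity-check matrix $H \in \mathbb{F}_2^{(t-k)\times t}$ column by column, greedily avoiding all sums of at most $d-2$ previously chosen columns, so that every $d-1$ columns are independent and the kernel is a linear code of dimension at least $k$ and minimum distance at least $d$. You also correctly identified the one point where a naive approach would fall short: a direct greedy selection of codewords (Gilbert's argument) only yields the weaker bound with $\sum_{j=0}^{d-1}\binom{t}{j}$ in the denominator, whereas the linear-algebraic construction is what delivers the $\sum_{j=0}^{d-2}\binom{t}{j}$ sum actually asserted in the statement; your bookkeeping (forbidden vectors are sums of at most $d-2$ old columns, including the empty sum) is accurate, and the observation that $H$ need not have full row rank is a correct and worthwhile detail.
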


\begin{cor}\label{code bound}
Let $G = \prod_{i = 1}^t K_{n_i}$.  We have
$$\gamma(G) \leq 2^t - A(t,2,t-1) \leq 3\cdot 2^{t-2}.$$
\end{cor}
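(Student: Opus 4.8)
The plan is to derive both inequalities directly from the tools already assembled. For the first inequality, I would let $M \subseteq \{0,1\}^t$ be a set of binary vectors of size $A(t,2,t-1)$ in which every pair of distinct vectors has Hamming distance in $\{2,\dots,t-1\}$. The key observation is that this distance condition is \emph{precisely} the hypothesis of Theorem \ref{dom code}: two vectors are equal in $t-1$ coordinates exactly when their Hamming distance equals $1$, and they differ in all $t$ coordinates exactly when their distance equals $t$, so forbidding distances $1$ and $t$ is the same as requiring all pairwise distances to lie in $\{2,\dots,t-1\}$. Applying Theorem \ref{dom code} to this $M$ shows that $\{0,1\}^t \cut M$ is a dominating set of $G$, and since it has size $2^t - |M| = 2^t - A(t,2,t-1)$, we conclude $\gamma(G) \leq 2^t - A(t,2,t-1)$.

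For the second inequality, I would reduce the claim $2^t - A(t,2,t-1) \leq 3 \cdot 2^{t-2}$ to the equivalent statement $A(t,2,t-1) \geq 2^{t-2}$, obtained in two steps. First, $A(t,2,t-1) \geq \frac{1}{2}A(t,2)$, which is the bound described just before Theorem \ref{GV bound}: starting from a code $C$ witnessing $A(t,2)$, the only forbidden distance for $A(t,2,t-1)$ not already excluded is $t$, which occurs exactly for pairs of Boolean complements; since each vector has a unique complement, these pairs form a matching on $C$, so discarding one vector from each complementary pair removes at most half of $C$ and leaves a set with all pairwise distances in $\{2,\dots,t-1\}$. Second, $A(t,2) \geq 2^{t-1}$, which follows from Theorem \ref{GV bound} with $d = 2$, since there $\sum_{j=0}^{0}\binom{t}{j} = 1$ so $k = t-1$ is admissible, or more directly from the even-weight parity-check code. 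Combining the two steps gives $A(t,2,t-1) \geq \frac{1}{2} \cdot 2^{t-1} = 2^{t-2}$, hence $2^t - A(t,2,t-1) \leq 2^t - 2^{t-2} = 3 \cdot 2^{t-2}$.

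The bulk of the argument is bookkeeping, so there is no serious obstacle; the one point requiring care is the translation between the combinatorial conditions of Theorem \ref{dom code} and Hamming distances, and the justification of the halving step $A(t,2,t-1) \geq \frac{1}{2}A(t,2)$ inline via the antipodal-matching argument. It is worth recording in passing that when $t$ is odd the even-weight code contains no complementary pairs, so in fact $A(t,2,t-1) \geq 2^{t-1}$ there, yielding a slightly stronger estimate; but the uniform bound $2^{t-2}$ is all that the stated corollary requires.
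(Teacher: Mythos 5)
Your proof is correct and follows essentially the same route as the paper: the Gilbert--Varshamov bound gives $A(t,2)\geq 2^{t-1}$, deleting one vector from each complementary pair gives a set of size at least $2^{t-2}$ satisfying the hypotheses of Theorem \ref{dom code}, and that theorem produces the dominating set. Your write-up is in fact slightly cleaner in that it isolates the first inequality $\gamma(G)\leq 2^t - A(t,2,t-1)$ as a direct consequence of Theorem \ref{dom code} applied to an optimal code, whereas the paper only establishes the chain through the explicit $2^{t-2}$-sized set, but this is a matter of organization rather than a different argument.
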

\begin{proof}
By Theorem \ref{GV bound}, we have $A(t,2) \geq 2^{t-1}$.  Let $M$ be a set of length $t$ binary vectors such that no two vectors are equal in $t-1$ coordinates and $|M| = 2^{t-1}$.  We can delete one vector in each pair of Boolean complements in $M$ to obtain a set $M' \subseteq M$ of size at least $2^{t-2}$ such that no two vectors are equal in $t-1$ coordinates nor differ in all coordinates.  Theorem \ref{dom code} implies $\{0,1\}^t \cut M'$ is a dominating set, hence
$$\gamma(G) \leq 2^t - 2^{t-2} = 3\cdot 2^{t-2}.\qedhere$$
\end{proof}

We now determine the domination number of a product of four complete graphs, extending  the results of Defant and Iyer in \cite{DI} and Meki{\v s} in \cite{Mek}.  Defant and Iyer determined $\gamma_t\left(\prod_{i = 1}^4 K_{n_i}\right)$ in the cases when $n_1 = 2$, $n_3 > 4 = n_1$, or $n_2 - 2 > 3 = n_1$.  Meki{\v s} determined $\gamma\left(\prod_{i = 1}^4 K_{n_i}\right)$ in the case that $n_1 \geq 5$.

\begin{thm}
If $2 \leq n_1  \leq n_2 \leq n_3 \leq n_4$ and $n_2 \geq 3$, then
$$\gamma\left(\prod_{i=1}^4 K_{n_i}\right) = \begin{cases} 8 & \text{ if } n_1 = 2 \\
7 & \text{ if } n_1 = 3, n_2 \leq 5\text{ or } n_1 = n_2 = n_3 = 4, n_4 \in\{4,5\}\\
6 & \text{ if } n_1 = 3, n_2 > 5 \text{ or } n_1 = 4, (n_3,n_4) \not\in \{(4,4),(4,5)\}\\
5 & \text{ if } n_1 \geq 5.
\end{cases}$$
Moreover, if $n_1 \geq 3$, then $\gamma\left( \prod_{i = 1}^4 K_{n_i}\right) = \gamma_t\left( \prod_{i = 1}^4 K_{n_i}\right)$.
\end{thm}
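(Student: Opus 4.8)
The proof is organized by the value of $n_1$. In each regime I would prove a matching lower and upper bound, and for $n_1 \geq 3$ I would choose the extremal dominating set to have no isolated vertex, so that it is in fact a total dominating set; since $\gamma(G) \leq \gamma_t(G)$ always, this immediately yields the final identity $\gamma = \gamma_t$ once the optimal values agree.

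For the lower bounds, the generic estimate is the Defant--Iyer bound $\gamma(\prod_{i=1}^4 K_{n_i}) \geq 5 + \lfloor 3/(n_1-1) \rfloor$ from \cite{DI}, which gives $\geq 8$ for $n_1 = 2$, $\geq 6$ for $n_1 \in \{3,4\}$, and $\geq 5$ for $n_1 \geq 5$. The value $8$ is thereby settled for $n_1 = 2$, and the value $5$ for $n_1 \geq 5$ is the theorem of Meki{\v s} \cite{Mek}. Thus the only lower bounds requiring new input are the two families in which the claimed value is $7$: the case $n_1 = 3$, $n_2 \leq 5$ and the two tuples $(4,4,4,4)$ and $(4,4,4,5)$. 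Here I would assume toward a contradiction that $S$ is a dominating set with $|S| = 6$ and exhibit an undominated vertex. The useful reformulation is that a vertex $u \notin S$ fails to be dominated exactly when every $s \in S$ agrees with $u$ in at least one coordinate; so it suffices to locate a common ``transversal'' $u$ lying outside $S$.

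I would carry this out by a case analysis on the multiset of coordinate values of $S$. When a coordinate size $n_i$ is large, one may set $u_i$ to a value absent from the $i$th coordinates of $S$, which removes coordinate $i$ from consideration and forces the remaining coordinates to carry all the agreements; for instance, when $n_1 = 3$, $n_2 \leq 5$ and $n_3, n_4$ are large this reduces to covering the $(1,2)$-projection of $S$ by a single value in coordinate $1$ together with a single value in coordinate $2$, and one checks that six points cannot evade every such cross while remaining dominating. The remaining subcases, including the finite tuples $(4,4,4,4)$ and $(4,4,4,5)$ where no coordinate value can be chosen fresh, are handled by the same agreement bookkeeping: one pins down the coordinate distribution of a hypothetical six-element dominating set until a transversal outside $S$ appears. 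I expect this extraction of the extra vertex --- improving the Defant--Iyer bound of $6$ to $7$ in precisely these families --- to be the main obstacle, as it rests entirely on the fine combinatorics of common transversals rather than on any counting estimate (the volume and coding bounds, e.g. Corollary \ref{code bound}, are far too weak here).

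For the upper bounds I would dominate layer by layer in the first coordinate. Writing $S_a = \{s \in S : s_1 = a\}$, a vertex $u$ with first coordinate $a$ is adjacency-dominated precisely by those $s \in S \cut S_a$ whose projections to coordinates $2,3,4$ differ from $u$ in all three places; hence it suffices that for every value $a$ the coordinate-$(2,3,4)$ projection of $S \cut S_a$ be a total dominating set of $K_{n_2} \times K_{n_3} \times K_{n_4}$, and this condition also makes every vertex of $S$ adjacency-dominated, so $S$ is automatically total. For $n_1 = 2$ I would place a size-$\gamma_t(K_{n_2}\times K_{n_3}\times K_{n_4})$ total dominating set of the three-fold product in each of the two layers, using that the three-fold product admits such a set of size $4$, for a total of $8$; for the remaining regimes I would exhibit explicit point sets of size $6$ for $n_1 = 3$ with $n_2 > 5$, of size $6$ or $7$ for the $n_1 = 4$ regime (the value $7$ occurring exactly for $(4,4,4,4)$ and $(4,4,4,5)$), and of size $5$ for $n_1 \geq 5$, choosing the layers so that each complementary union $S \cut S_a$ projects onto a total dominating set of the three-fold product. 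Verifying these covering conditions is a direct finite check in the fixed-size families and a short argument exploiting the largeness of the $n_i$ elsewhere; together with $\gamma \leq \gamma_t$ this establishes both the stated values and the equality $\gamma = \gamma_t$ for $n_1 \geq 3$.
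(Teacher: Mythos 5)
Your layered sufficient condition for the upper bounds (every complementary union $S \setminus S_a$ projecting onto a total dominating set of $K_{n_2}\times K_{n_3}\times K_{n_4}$) is in outline the same mechanism the paper uses for its explicit $7$- and $6$-point sets when $n_1 \geq 3$. But your treatment of $n_1 = 2$ rests on a false claim: the three-fold product does \emph{not} always admit a total dominating set of size $4$; this fails whenever $n_2 = 3$. Indeed, suppose $T = \{t_1,t_2,t_3,t_4\}$ were a total dominating set of $K_3 \times K_{n_3}\times K_{n_4}$. Some pair of $T$ must be adjacent (each vertex of $T$ needs a neighbor in $T$); after relabeling, say $t_1 = (0,0,0)$ and $t_2 = (1,1,1)$. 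Every vertex $(0,1,c)$ agrees with $t_1$ in coordinate $1$ and with $t_2$ in coordinate $2$, so the line $\{(0,1,c) : c\}$ must be dominated by $t_3$ and $t_4$ alone; a vertex $t_i$ that dominates anything in this line satisfies $t_i(1) \neq 0$ and $t_i(2) \neq 1$ and then misses exactly one member of the line, so in fact \emph{both} $t_3$ and $t_4$ must satisfy $t_i(1)\neq 0$, $t_i(2) \neq 1$. The line $\{(1,0,c):c\}$ likewise forces $t_i(1) \neq 1$ and $t_i(2) \neq 0$, whence $t_3(1) = t_4(1) = 2$. Now $(2,0,1)$ agrees with each of $t_1,t_2,t_3,t_4$ in some coordinate and is undominated, a contradiction. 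So $\gamma_t(K_3\times K_{n_3} \times K_{n_4}) \geq 5$, your stacked construction has size at least $10$ rather than $8$ when $n_1=2$, $n_2=3$, and no $8$-element set at all can satisfy your sufficient condition there (both layers would have to project to total dominating sets). That case needs a genuinely different argument; the paper handles it by citing Theorem 2.9 of \cite{DI}. Relatedly, your size-$6$ and size-$7$ sets for the $n_1\in\{3,4\}$ regimes are never exhibited, and the verification is not automatic: the complementary-union condition must hold for \emph{every} first-coordinate value, which is exactly what distinguishes $(4,4,4,5)$ from $(4,4,4,6)$.

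The lower bounds of $7$ are the heart of the theorem, and your cross-covering reduction cannot close them. After choosing $u_3,u_4$ fresh, what you would need is that every $6$-point projection onto the first two coordinates is covered by some cross $\bigl(\{u_1\}\times \cdot\bigr) \cup \bigl(\cdot \times \{u_2\}\bigr)$; this is false. If the projection is the full grid $\{0,1,2\}\times\{0,1\}$, the four points outside any fixed row occupy two rows and two columns, so no cross covers them. Worse, cross-evading $6$-point \emph{dominating} sets really exist once $n_2 \geq 6$ (that is precisely why the answer drops to $6$ there), so no argument working only with the $(1,2)$-projection can succeed: one must use $n_2 \leq 5$ together with coordinates $3$ and $4$. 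Your phrase ``six points cannot evade every such cross while remaining dominating'' therefore restates the goal rather than proving it, and the remaining cases --- $(4,4,4,4)$, $(4,4,4,5)$, and all cases with $n_3,n_4$ small --- are deferred to unspecified ``bookkeeping.'' This is where the paper does its actual work: it cites Theorem 2.8 of \cite{DI} for $n_1 = 3$, $n_2 \leq 5$, $n_3 \geq 5$, and for $n_3 \leq 4$ it gives a concrete counting argument (a $6$-set forces at least seven coordinate agreements among the $\binom{6}{2}$ pairs, hence two disjoint pairs agreeing in distinct coordinates, from which an undominated vertex is constructed). Until both the $n_1 = 2$ construction and these lower bounds are supplied by other means, the proposal does not establish the theorem.
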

\begin{proof}
Let $G =  \prod_{i = 1}^4 K_{n_i}$.  Theorem 2.9 of \cite{DI} handles the case $n_1 = 2$.  The cases $n_3 > 4 = n_1$ or $n_2 - 2 > 3 = n_1$ follow from Theorem 2.8 of \cite{DI}.  The case $n_1 \geq 5$ is determined by Corollary 2.2 of \cite{Mek}.

We first show that if $n_1 \geq 3$, then $\gamma(G) \leq 7$.  Let our (total) dominating set be
$$D = \{(0,0,0,0),(1,0,1,1),(1,1,0,1),(1,0,1,1),(2,2,2,0),(2,2,0,2),(2,0,2,2)\}$$
Fix $(x_1,x_2,x_3,x_4) \in V(G)$.  Let $Y = \{(y_1,y_2,y_3,y_4) \in D : y_1 \not= x_1\}$.  We claim that $Y$ is a dominating set of $K_{n_2}\times K_{n_3} \times K_{n_4}$.  If $x_1 \in \{1,2\}$, then it is shown in \cite{Val} that $Y$ is dominating.  If $x_1 = 0$, observe that $Y$ consists of $6$ vertices where no three are equal in any given coordinate.  Moreover, no three disjoint pairs of vertices in $Y$ can be chosen such that each pair is equal in a distinct coordinate.  Thus $Y$ is dominating set of $K_{n_2}\times K_{n_3} \times K_{n_4}$ for any choice of $x_1$, so $D$ is a dominating set of $G$.

Next, we show that if $n_1 = 4$ and $n_3,n_4 \not\in \{(4,4),(4,5)\}$, then $\gamma(G) = 6$.  The lower bound follows from Theorem 2.6 of \cite{DI}.  For the upper bound, consider the set
$$D' = \{(0,0,0,0),(0,1,1,1),(1,0,1,2),(1,1,0,3),(4,4,4,4),(5,5,5,5)\}$$
Observe that there are no two pairs of vertices in $D'$ such that each pair is equal in a different coordinate.  Moreover, no three vertices of $D'$ are equal in any coordinate.  Therefore any vertex of $G$ is adjacent to a vertex of $D'$, hence $D'$ is a total dominating set of $G$.

It remains to show that $\gamma(G) \geq 7$ if $n_1 = 3, n_2 \leq 5\text{ or } n_1 = n_2 = n_3 = 4, n_4 \in\{4,5\}$. The lower bound in the case $n_1 = 3, n_2 \leq 5, n_3 \geq 5$ follows from Theorem 2.8 of \cite{DI}.  Thus we need only handle the case $n_3 \leq 4$.  Fix $\tilde D \subseteq (G)$ such that $|\tilde D| = 6$.  It is straightforward to show that if three vertices of $\tilde D$ are equal in some coordinate, then there are at least $6$ vertices which are not adjacent to a vertex of $\tilde D$, hence $\tilde D$ is not dominating. Observe that there must be at least seven tuples $(\{v,w\},i) \in \binom{\tilde D}{2} \times \{1,2,3,4\}$ such that the $i^{\text{th}}$ coordinates of $v$ and $w$ are equal.  It is straightforward to show that there exists two such tuples $(\{v,w\},i)$ and $(\{\hat v,\hat w\},\hat i)$ such that $\{v,w\} \cap \{\hat v, \hat w\} = \emptyset$ and $i \not= \hat i$.  Thus the vertex which is equal to $v$ in the $i^{\text{th}}$ coordinate, $\hat v$ in the $\hat i^{th}$ coordinate, and each vertex of $\tilde D \cut \{v,w,\hat v,\hat w\}$ in the remaining two coordinates is not dominated by $\tilde D$.  We can conclude that $\gamma(G) > 6$ if $n_3 \leq 4$.
\end{proof}

\subsection{Maximal Independent Sets in Products of Complete Graphs}
We begin by calculating the lower independence numbers of products of two or three complete graphs.

\begin{thm}\label{ind dom number}
Let $G = \prod_{i = 1}^t K_{n_i}$ for $2 \leq n_1 \leq \cdots \leq n_t$.  We have
$$i(G) = \begin{cases} n_1 & \text{ if } t = 2\\
4 &\text{ if } t = 3.\end{cases}$$
\end{thm}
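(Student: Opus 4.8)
The goal is to compute $i(G)$, the minimum size of a maximal independent set, for $G = K_{n_1} \times \cdots \times K_{n_t}$ when $t = 2$ and $t = 3$. The plan is to exhibit an explicit small maximal independent set (upper bound) and then argue that no smaller independent set can be maximal (lower bound).

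\emph{The case $t = 2$.}
First I would recall that two vertices $(a_1,a_2)$ and $(b_1,b_2)$ are independent (non-adjacent) exactly when they agree in some coordinate. For the upper bound $i(G) \le n_1$, I would take the ``diagonal-in-the-first-coordinate'' set $S = \{(j, 0) : 0 \le j \le n_1 - 1\}$; these are pairwise non-adjacent since they share the second coordinate, and I claim $S$ is maximal. Indeed, any vertex $(x_1,x_2)$ has first coordinate $x_1 \in \{0,\dots,n_1-1\}$, so it agrees with $(x_1,0) \in S$ in the first coordinate and hence is either in $S$ or non-adjacent to some member of $S$; in fact every vertex agrees with a member of $S$, so $S$ dominates and is maximal independent. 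For the lower bound $i(G) \ge n_1$, I would show that any maximal independent set must contain, for each value $j \in \{0,\dots,n_1-1\}$ of the first coordinate, at least one vertex hitting that ``row'' in the sense needed to block extension: if some first-coordinate value $j$ were entirely unrepresented in a way that leaves a vertex $(j,x_2)$ addable, the set would not be maximal. The cleanest route is a counting/covering argument: a maximal independent set $S$ must dominate $G$, and since each vertex of $S$ dominates a structured subset, I would count how few vertices can possibly agree-in-some-coordinate with all $n_1 n_2$ vertices, concluding the minimum is $n_1$.

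\emph{The case $t = 3$.}
For the upper bound $i(G) \le 4$ I would write down an explicit maximal independent set of size $4$, mirroring the independent configurations already used in the proof of Theorem \ref{dom numbers}; for instance a set like $\{(0,0,0),(1,1,1),(2,2,2),\dots\}$ adjusted so that every other vertex agrees with one of the four in some coordinate, verifying maximality by checking no vertex can be added. The lower bound $i(G) \ge 4$ is where the real work lies: I must show that no independent set of size $3$ can be maximal. Given any independent set $\{u,v,w\}$ of size $3$, each pair agrees in at least one coordinate; I would analyze the (at most $3 \times 3 = 9$) coordinate-agreement pattern by cases and in each case produce a vertex agreeing with all three (hence addable while preserving independence), contradicting maximality. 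This is essentially the same style of case analysis carried out for $t=3$ in Theorem \ref{dom numbers}, where a fourth vertex completing an independent configuration was repeatedly found.

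\emph{Main obstacle.}
I expect the lower bound for $t=3$ to be the crux: ruling out \emph{every} maximal independent set of size $3$ requires a careful enumeration of the coordinate-agreement patterns among three mutually non-adjacent vertices and, in each pattern, constructing a fourth vertex that agrees with all three simultaneously. The subtlety is that one must find a fourth vertex that is non-adjacent to all of $u,v,w$ at once, which forces a consistent choice of a shared coordinate value with each — and showing such a choice always exists (using $n_3 \ge n_2 \ge n_1 \ge 2$ and hence enough room in the coordinates) is the delicate step. The upper bounds and the entire $t=2$ analysis should be routine by comparison.
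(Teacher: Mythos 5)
Your plan has the right two-sided shape (explicit small maximal independent sets plus lower bounds), but it contains an inverted piece of adjacency logic that undermines both maximality verifications, and the lower bounds --- the actual content of the theorem --- are not established. In $\prod_{i=1}^t K_{n_i}$, two vertices are adjacent exactly when they differ in \emph{every} coordinate; agreeing in some coordinate is the \emph{non}-adjacency condition. You repeatedly use agreement as if it were domination: for $t=2$ you argue that every vertex ``agrees with a member of $S$, so $S$ dominates,'' and for $t=3$ you want ``every other vertex [to agree] with one of the four in some coordinate.'' Both statements assert precisely that outside vertices are non-adjacent to members of $S$, which is the condition that would let them be \emph{added} to $S$, not the condition that blocks addition. (Your diagonal set $\{(j,0)\}$ is in fact maximal, but for the opposite reason: any $(x_1,x_2)$ with $x_2\neq 0$ differs in both coordinates from some $(j,0)$ with $j\neq x_1$, which exists since $n_1\geq 2$.) The same inversion makes your $t=3$ candidate $\{(0,0,0),(1,1,1),(2,2,2),\dots\}$ unusable: those vertices differ in all coordinates, so they are pairwise adjacent, not independent. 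A correct example, as in the paper, is $\{(0,0,0),(1,0,1),(1,1,0),(0,1,1)\}$.

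The more serious gap is the $t=2$ lower bound, where neither mechanism you propose can work. A counting/covering argument cannot give $n_1$: each vertex of $K_{n_1}\times K_{n_2}$ closed-dominates $1+(n_1-1)(n_2-1)$ vertices, so counting yields only a constant bound (indeed $\gamma(K_{n_1}\times K_{n_2})=3$ for $n_1\geq 3$, far below $n_1$); the bound $i(G)\geq n_1$ genuinely needs independence, not just domination. Likewise, the claim that every first-coordinate value must be ``represented'' is false: the fiber $\{(0,y) : 0\leq y\leq n_2-1\}$ is a maximal independent set in which only one first-coordinate value occurs. The paper supplies the missing structural fact: among any three pairwise non-adjacent vertices one cannot have $x_1=y_1\neq z_1$ and $y_2=z_2\neq x_2$ simultaneously, so any three pairwise non-adjacent vertices must agree in a \emph{common} coordinate; hence every maximal independent set is a full fiber of a coordinate projection, and the smallest fiber has size $n_1$. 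For $t=3$, your proposed route (show every independent set of size $3$ extends, by cases on the agreement pattern) is sound and genuinely different from the paper, which instead gets the lower bound for free from the domination chain $i(G)\geq\gamma(G)\geq\ir(G)=4$ via Theorem \ref{dom numbers}; but you left that case analysis, which you correctly identify as the crux, undone. It is short: if all three vertices agree in one coordinate, extend inside that fiber, which has at least $4$ vertices; otherwise the three pairwise agreements occupy three distinct coordinates, say $x_1=x_2\neq x_3$, $y_1=y_3\neq y_2$, $z_2=z_3\neq z_1$, and then $(x_3,y_2,z_1)$ is non-adjacent to all three, so the set is not maximal.
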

\begin{proof}
Suppose $t = 2$.  If $n_1 = 2$, observe that $\{(0,0),(1,0)\}$ is a maximal independent set and $i(G) \geq \gamma(G) \geq 2$, hence $i(G) = n_1= 2$.  For $n_1 \geq 3$, we will show that the maximal independent sets are precisely the fibers under projection onto some coordinate.  Since the minimum size of such fibers is $n_1$, occurring when the projection is onto the second coordinate, this is sufficient.

Fix three independent vertices $(x_1,x_2),(y_1,y_2),(z_1,z_2) \in V(G)$.  It is impossible to have both $x_1 = y_1 \not= z_1$ and $y_2 = z_2 \not= x_2$, as in this case $x_1 \not= z_1$ and $x_2 \not= z_2$.  Thus, every set of three independent vertices must be equal in some coordinate.  We can conclude that the maximal independent sets must all be equal in some coordinate, hence can be extended to a fiber under the projection onto that coordinate.  Therefore $i(G) = n_1$.

Now we handle the case $t = 3$.  Observe that the set $\{(0,0,0),(1,0,1),(1,1,0),(0,1,1)\}$ forms a maximal independent set.  Theorem \ref{dom numbers} provides the lower bound $4 = \gamma(G) \leq i(G)$, so we can conclude $i(G) = 4$.
\end{proof}

\begin{cor}
For all integers $n_1,n_2,n_3\geq 3$, we have
\begin{align*}
    &\ir\left(K_2 \times K_{n_1}\right) = \alpha\left(K_2 \times K_{n_1}\right) = i\left(K_2 \times K_{n_1}\right),\\
    &\ir\left(K_3 \times K_{n_1}\right) = \alpha\left(K_3 \times K_{n_1}\right) = i\left(K_3 \times K_{n_1}\right),\\
    &\ir\left(K_{n_1} \times K_{n_2} \times K_{n_3}\right) = \alpha\left(K_{n_1} \times K_{n_2} \times K_{n_3}\right) = i\left(K_{n_1} \times K_{n_2} \times K_{n_3}\right).
\end{align*}
\end{cor}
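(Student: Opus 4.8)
The plan is to lean on the domination chain, which for every graph $G$ furnishes $\ir(G) \le i(G) \le \alpha(G)$. To collapse this into the desired triple equality it therefore suffices to prove the single reverse inequality $\alpha(G) \le \ir(G)$ for each of the three families: combined with the chain this yields $\ir(G) \le i(G) \le \alpha(G) \le \ir(G)$ and forces all three quantities to coincide. Observe that Theorem \ref{dom numbers} already pins down $\ir$ (namely $2$, $3$, and $4$ for $K_2 \times K_{n_1}$, $K_3 \times K_{n_1}$, and $K_{n_1} \times K_{n_2} \times K_{n_3}$) while Theorem \ref{ind dom number} pins down $i$ (namely $2$, $3$, and $4$), so the link $\ir(G) \le i(G)$ is in fact already an equality; the genuinely new content of the corollary is thus the upper bound $\alpha(G) \le \ir(G)$, or equivalently, since $\ir(G)=i(G)$, the statement $\alpha(G) = i(G)$.

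To attack $\alpha(G) \le \ir(G)$ I would first compute $\alpha(G)$ directly. For the two-factor families I would reuse the structural observation from the proof of Theorem \ref{ind dom number} that every maximal independent set is a fiber of a coordinate projection, so that the maximal independent sets are exactly the full rows and full columns and $\alpha$ is read off as the larger of the two factor orders. For $K_{n_1}\times K_{n_2}\times K_{n_3}$ (with $n_1 \le n_2 \le n_3$) a single coordinate fiber already gives an independent set of size $n_2 n_3$, and that this value is $\alpha$ follows from the standard upper bound on independent sets in tensor products of complete graphs; here the maximal independent sets are no longer all fibers, as the size-$4$ set exhibited in Theorem \ref{ind dom number} witnesses. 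One would then compare these values of $\alpha$ against the irredundance numbers $2$, $3$, and $4$ furnished by Theorem \ref{dom numbers}.

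The main obstacle is precisely this upper bound $\alpha(G) \le \ir(G)$. Having already identified it with the assertion $\alpha(G) = i(G)$, it says that the largest independent set and the smallest maximal independent set of $G$ have the same cardinality, i.e.\ that $G$ is \emph{well-covered}. The decisive and delicate step of the plan is therefore to verify, fiber-by-fiber for each family, that the extreme maximal independent sets agree in size, rather than merely invoking the chain to bound one parameter by another; this comparison of the explicit fiber sizes computed above against the small irredundance values $2,3,4$ is where the real work — and the validity of the claim — ultimately resides.
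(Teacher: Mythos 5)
Your plan cannot be carried out, and the obstruction sits exactly at the step you defer to the end. You propose to collapse the chain $\ir(G) \le i(G) \le \alpha(G)$ by proving the reverse inequality $\alpha(G) \le \ir(G)$, i.e.\ that these graphs are well-covered; but your own computations refute this. By your fiber analysis, $\alpha(K_2 \times K_{n_1}) = n_1 \ge 3$ while Theorem \ref{dom numbers} gives $\ir(K_2 \times K_{n_1}) = 2$; likewise $\alpha(K_3 \times K_{n_1}) = n_1 > 3 = \ir(K_3 \times K_{n_1})$ for every $n_1 \ge 4$, and $\alpha(K_{n_1} \times K_{n_2} \times K_{n_3}) \ge n_2 n_3 \ge 9 > 4 = \ir(K_{n_1} \times K_{n_2} \times K_{n_3})$. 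So the ``comparison of the explicit fiber sizes against the small irredundance values $2,3,4$'' that you identify as the crux does not verify the claim --- it disproves the statement as literally written (note that $i$ equals the \emph{smaller} factor order in the two-factor case while $\alpha$ equals the larger; they coincide only in degenerate cases such as $K_3 \times K_3$). Moreover, even granting your reading, what you have written is a plan, not a proof: the decisive step is explicitly left open.

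What has actually happened is that the $\alpha$ in the corollary is a misprint for $\gamma$: the domination chain reads $\ir(G) \le \gamma(G) \le i(G) \le \alpha(G)$, and it is $\gamma$, not $\alpha$, that sits between the two parameters computed in Section \ref{Low Dom}. This is confirmed by the introduction, which advertises the result as producing infinite families with $\ir(X_n) = \gamma(X_n) = i(X_n)$, and by the Further Directions section, which records the consequence as $\ir(X_n) = i(X_n)$ for the corresponding integers $n$. With $\gamma$ in place of $\alpha$ the corollary is immediate and needs no new inequality: Theorem \ref{dom numbers} gives $\ir = 2, 3, 4$ for the three families, Theorem \ref{ind dom number} gives $i = 2, 3, 4$, and since the chain places $\gamma$ between two equal quantities, all three parameters coincide. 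Your opening observation --- that the link $\ir(G) \le i(G)$ is already an equality by the two cited theorems --- \emph{is} the intended proof in its entirety; the error was to then treat $\alpha(G) = i(G)$, a false statement for these families, as the ``genuinely new content'' instead of stopping there with $\gamma$ squeezed in between.
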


We also briefly note an error in a paper by Uma Maheswari and Maheswari \cite{MahMah}.  Their paper correctly shows that $i(X_n) \leq \frac{n}{p_t}$, where $p_t$ is the largest prime divisor of $n$, but they incorrectly claim that equality holds for all $n$.  We show that the gap between this upper bound and the true value of $i(X_n)$ can be arbitrarily large.  

\begin{thm}
Let $p_1,\ldots,p_t$ be distinct primes, where $t\geq 3$. For $n=p_1\cdots p_t$, we have $i(X_n) \leq 4p_1p_2\cdots p_{t-3}$.
\end{thm}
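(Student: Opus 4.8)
The plan is to build an explicit maximal independent set of the prescribed size by exploiting the product structure, reducing everything to the three largest prime factors. Since $n$ is squarefree we have $X_n \isom \prod_{i=1}^t K_{p_i}$, and I will view each vertex as a tuple $(x_1,\dots,x_t)$ with $x_i \in \Z/p_i\Z$, recalling that two vertices are adjacent precisely when they differ in \emph{every} coordinate (equivalently, a set is independent iff every pair of its vertices agrees in at least one coordinate). Let $I$ be a maximal independent set of size $4$ in the factor $K_{p_{t-2}} \times K_{p_{t-1}} \times K_{p_t}$ on the three largest coordinates, which exists by Theorem \ref{ind dom number}; for concreteness one may take $I = \{(0,0,0),(1,0,1),(1,1,0),(0,1,1)\}$. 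Define
$$S = \left\{(a_1,\dots,a_{t-3},b_1,b_2,b_3) : a_i \in \Z/p_i\Z \text{ for } 1 \le i \le t-3 \text{ and } (b_1,b_2,b_3) \in I\right\}.$$
Then $|S| = 4\,p_1 p_2 \cdots p_{t-3}$, so it suffices to show that $S$ is a maximal independent set of $X_n$.

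First I would check that $S$ is independent. Given two distinct vertices of $S$, I compare their last three coordinates: if these coincide, the vertices already agree there; if they differ, then the two corresponding members of $I$ are distinct, so by independence of $I$ they agree in one of the last three coordinates. In either case the two vertices of $S$ agree in at least one coordinate and hence are nonadjacent, so $S$ is independent.

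Next I would verify that $S$ is dominating, which together with independence shows $S$ is maximal (a maximal independent set is exactly an independent dominating set). Fix any vertex $v = (c_1,\dots,c_{t-3},d_1,d_2,d_3)$. If $(d_1,d_2,d_3) \in I$, then $v \in S$ and there is nothing to prove. Otherwise, since $I$ is maximal and hence dominating in the three-coordinate factor, there is a member $(b_1,b_2,b_3) \in I$ differing from $(d_1,d_2,d_3)$ in all three coordinates; choosing $a_i \in \Z/p_i\Z$ with $a_i \ne c_i$ for each $1 \le i \le t-3$ (possible since each $p_i \ge 2$) produces a vertex $(a_1,\dots,a_{t-3},b_1,b_2,b_3) \in S$ that differs from $v$ in every coordinate, i.e. is adjacent to $v$. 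Thus $v \in N[S]$ in all cases, so $S$ is dominating, and therefore $i(X_n) \le |S| = 4\,p_1 p_2 \cdots p_{t-3}$.

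The only real content lies in the domination step, and it rests entirely on the fact that the size-$4$ set $I$ is not merely independent but \emph{dominating} in the product of the three largest factors; this is precisely what Theorem \ref{ind dom number} supplies. The independence check and the existence of a coordinatewise-differing $a$ are routine once each $p_i \ge 2$, and placing the full product on the three \emph{smallest} excluded coordinates would only inflate the bound, so assigning the size-$4$ block to $p_{t-2},p_{t-1},p_t$ is what yields the factor $p_1\cdots p_{t-3}$.
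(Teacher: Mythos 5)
Your proof is correct and follows essentially the same route as the paper's: both take the maximal independent set $\{(0,0,0),(1,0,1),(1,1,0),(0,1,1)\}$ in $K_{p_{t-2}} \times K_{p_{t-1}} \times K_{p_t}$ from Theorem \ref{ind dom number} and cross it with the full vertex set of the remaining $t-3$ factors, verifying independence via agreement in the last three coordinates and domination by pairing a coordinatewise-differing prefix with a dominating element of the size-$4$ set. Your write-up is, if anything, slightly more explicit than the paper's about why the three-coordinate set dominates and why a maximal independent set is the same thing as an independent dominating set.
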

\begin{proof}
Recall from the proof of Theorem \ref{ind dom number} that $E = \{(0,0,0),(1,0,1),(1,1,0),(0,1,1)\}$ is an independent dominating set of $K_{p_{t-2}} \times K_{p_{t-1}} \times K_{p_t}$.  We claim that the set 
$$D = \left(\prod_{i = 1}^{t-3} \{0,\dots,p_i - 1\} \right) \times \{(0,0,0),(1,0,1),(1,1,0),(0,1,1)\}$$
is an independent dominating set of $\prod_{i=1}^t K_{p_i} = X_n$.  For any vertex $x$ equal to an element of $D$ in the last three coordinates, we have $x \in D$.  For any vertex $x$ not equal to an element of $E$ in the last three coordinates, we can form a vertex $y \in D$ adjacent to $x$ by taking $y_i = x_i + 1$ in the first $t-3$ coordinates and choosing a dominating element of $E$ in the last three.  Lastly, we note that $D$ is independent as any two vertices of $D$ will be equal in at least one of the last three coordinates.
\end{proof}

Therefore, the upper bound given by Uma Maheswari and Maheswari \cite{MahMah} is tight only when $n$ has at most $2$ prime divisors.

\begin{cor}
For any $t \geq 3$ and $\epsilon > 0$, there exists a positive integer $n$ such that $\omega(n) = t$ and $\frac{i(X_n)}{n/p_t} < \epsilon$, where $p_t$ is the largest prime divisor of $n$.
\end{cor}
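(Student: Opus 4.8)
The plan is to derive this corollary directly from the preceding theorem, which supplies the only substantive input: if $p_1,\ldots,p_t$ are distinct primes and $n = p_1\cdots p_t$, then $i(X_n) \leq 4p_1p_2\cdots p_{t-3}$. The entire corollary will then follow from a single ratio computation together with the fact that there are infinitely many primes, so that the two ``missing'' primes in the bound can be taken as large as we like.

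Concretely, I would first fix $t \geq 3$ and $\epsilon > 0$, and choose $t$ distinct primes and order them as $p_1 < p_2 < \cdots < p_t$, reserving the freedom to take them large. Setting $n = p_1\cdots p_t$ gives $\omega(n) = t$ with largest prime divisor $p_t$, as required. Next I would record the two quantities entering the ratio: the preceding theorem gives $i(X_n) \leq 4\,p_1\cdots p_{t-3}$, while dividing out the largest prime gives $n/p_t = p_1\cdots p_{t-1}$. Forming the quotient and cancelling the common factor $p_1\cdots p_{t-3}$ yields
$$\frac{i(X_n)}{n/p_t} \;\leq\; \frac{4\,p_1\cdots p_{t-3}}{p_1\cdots p_{t-1}} \;=\; \frac{4}{p_{t-2}\,p_{t-1}}.$$
The final step is simply to arrange that $p_{t-2}p_{t-1} > 4/\epsilon$; for instance, I would pick the smallest $t$ primes all exceeding $2/\sqrt{\epsilon}$, so that every $p_i$, and in particular $p_{t-2}$ and $p_{t-1}$, is bounded below by $2/\sqrt{\epsilon}$, whence $p_{t-2}p_{t-1} > 4/\epsilon$ and the ratio is strictly less than $\epsilon$.

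There is essentially no obstacle here: the corollary is an immediate asymptotic consequence of the theorem, and the only thing to verify carefully is the cancellation in the displayed inequality and the compatibility of the labelling (namely that ordering the primes increasingly makes $p_t$ both the largest divisor and the prime excluded from the numerator bound, so that the theorem applies verbatim). All the genuine content—the explicit independent dominating set $D$ and the verification that it is independent and dominating—was already carried out in the proof of the theorem.
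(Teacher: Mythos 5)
Your proposal is correct and is exactly the intended derivation: the paper states this corollary without proof as an immediate consequence of the preceding theorem, and your ratio computation $\frac{i(X_n)}{n/p_t} \leq \frac{4}{p_{t-2}p_{t-1}}$ (valid also at $t=3$, where the numerator product is empty) together with choosing all primes larger than $2/\sqrt{\epsilon}$ is precisely the argument being left to the reader. Your remark about labelling the primes increasingly so that the theorem's excluded primes are the largest ones is the right point to check, and it holds.
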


\section{Upper Domination Parameters of Products of Complete Multipartite Graphs}\label{Upp Dom}

We now shift our focus to the upper portion of the domination chain and broaden our scope to products of balanced, complete multipartite graphs.  In \cite{DI}, Defant and Iyer initiated the investigation of the upper domination parameters of products of balanced, complete multipartite graphs, proving the following result.

\begin{thm}\emph{(\cite{DI})}\label{DI upper}
Let $G = \prod_{i = 1}^t K[a_i,b_i]$ with $2 \leq b_1 \leq \cdots \leq b_t$.  If $b_1 = 2$ or $t \leq 3$, then $\alpha(G) = \IR(G)$.
\end{thm}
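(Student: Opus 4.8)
The plan is to prove the nontrivial inequality $\IR(G)\le\alpha(G)$, since $\alpha(G)\le\IR(G)$ is immediate from the domination chain. The first step is a structural reduction. Writing each coordinate graph $K[a_i,b_i]$ so that adjacency depends only on the partite-class label, one sees that adjacency in $G=\prod_{i=1}^t K[a_i,b_i]$ depends only on the tuple of class labels, which ranges over $V(H)$ for $H=\prod_{i=1}^t K_{b_i}$, with two label-tuples adjacent exactly when they differ in every coordinate. Hence $G$ is the blow-up of $H$ obtained by replacing each vertex $h$ of $H$ with an independent ``blob'' $B_h$ of size $A\defeq\prod_{i=1}^t a_i$, joining $B_h$ completely to $B_{h'}$ whenever $h$ and $h'$ are adjacent in $H$. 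In particular any two vertices in a common blob are non-adjacent twins with identical neighborhoods, and a maximum independent set of $G$ is a union of blobs over a maximum independent set of $H$, so $\alpha(G)=A\,\alpha(H)$.

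Next I would isolate the effect of the twins on irredundant sets. If an irredundant set $S$ meets a blob $B_h$ in two vertices $u,u'$, then since $u,u'$ are twins every external neighbor of $u$ is also a neighbor of $u'\in S\cut\{u\}$, so the only possible private neighbor of $u$ is $u$ itself; thus every vertex of $S$ lying in such a ``heavy'' blob is isolated in $G[S]$. Consequently heavy blobs are pairwise non-adjacent and non-adjacent to every other occupied blob of $H$, the union of their full blobs is independent, and their contribution to $|S|$ is at most $A$ times the number of heavy blobs. This reduces the problem to controlling the remaining ``light'' vertices (one per blob), and the target inequality becomes $|S|\le A\,\alpha(H)$. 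A useful general tool here is that the private-neighbor sets $\pn[v;S]$ are pairwise disjoint, so choosing one private neighbor per vertex gives an injection $S\hookrightarrow V(G)$; the difficulty is that its image need not be independent, so the structure of $H$ must be used to either choose the private neighbors independently or to exchange them for an independent set of the same size.

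The heart of the argument is the core bound $\IR(H)\le\alpha(H)$ for $H=\prod_{i=1}^t K_{b_i}$ under the two hypotheses, which I would treat separately. When $b_1=2$, I would exploit the order-two coordinate: the map $\sigma$ flipping that coordinate is a fixed-point-free automorphism sending every vertex to a non-neighbor (the two images agree in all remaining coordinates), so $H$ is the bipartite double cover of $\prod_{i\ge2}K_{b_i}$. I expect this to let one convert the private-neighbor injection into an independent set of equal size via a K\"onig-type matching argument across the two sides, yielding $|S|\le\alpha(H)$. When $t\le3$ I would argue by finite structural case analysis: the cases $t=1,2$ are direct, and $t=3$ extends the private-neighbor bookkeeping already carried out for $\ir$ in Theorem \ref{dom numbers}, classifying how the disjoint private neighbors of a putative large irredundant set force it to collapse onto a fiber. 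Finally I would lift the core bound back through the blow-up, combining it with the heavy/light decomposition to conclude $\IR(G)\le A\,\alpha(H)=\alpha(G)$.

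The main obstacle is precisely this core step, and within it the $t=3$ case. Producing, from an arbitrary irredundant set, an independent set of at least the same cardinality requires carefully tracking the disjoint private-neighbor sets and ruling out configurations in which they are mutually adjacent; the hypotheses $b_1=2$ and $t\le3$ are exactly what make these configurations impossible, but verifying this seems to demand either the clean involution argument above in the binary case or a somewhat intricate case analysis in the three-factor case. A secondary technical point is ensuring the lift through the blow-up respects private neighbors: a private neighbor of a light-blob vertex may a priori lie in a heavy blob's neighborhood, so the reduction in the second step must be executed with the twin structure of $G$ in hand rather than purely on $H$.
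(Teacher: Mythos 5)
You should first be aware that this paper contains no proof of Theorem \ref{DI upper} to compare against: the statement is imported from \cite{DI}, and the paper's only contribution here is the remark that Defant and Iyer's proof of $\alpha(G)=\Gamma(G)$ never uses the dominating hypothesis and therefore already gives $\alpha(G)=\IR(G)$. Judged on its own terms, your proposal is a framework whose essential content is missing, and the framework itself has a hole. After the heavy/light split, the inequality you need is $|\mathcal{H}|+|\mathcal{L}|\le\alpha(H)$ (this gives $|S|\le A|\mathcal{H}|+|\mathcal{L}|\le A\alpha(H)=\alpha(G)$), and the only tool you offer for it is the core bound $\IR(H)\le\alpha(H)$ applied to the projection $\pi(S)\subseteq V(H)$. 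But the projection of an irredundant set of $G$ need not be irredundant in $H$: a private neighbor $w$ of a light occupant $v\in B_h$ may lie \emph{inside another occupied light blob} $B_{h'}$, since the occupant of $B_{h'}$ does not dominate $w$ (vertices in a common blob are non-adjacent); in $H$, however, the label $h'$ then fails to be a private neighbor of $h$ because $h'\in\pi(S)$ dominates itself. The "secondary technical point" you flag concerns heavy blobs and is harmless (a private neighbor of a light vertex cannot lie in a blob adjacent to a heavy one, as the heavy occupants would dominate it); it is this light-blob configuration that actually breaks the lift, and nothing in the proposal repairs it.

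The deeper problem is that the two "core bounds" you defer are the entire theorem. For $b_1=2$ your double-cover intuition can be made rigorous in one line, with no blow-up reduction at all: a direct product is bipartite as soon as one factor is, and $K[a_1,2]=K_{a_1,a_1}$ is bipartite, so $G$ itself is bipartite, hence strongly perfect, and the Cheston--Fricke theorem quoted in this paper's introduction \cite{CF} gives $\alpha(G)=\IR(G)$ directly. For $t\le 3$, however, "extend the private-neighbor bookkeeping of Theorem \ref{dom numbers}" is not an argument: that theorem bounds the minimum size of \emph{maximal} irredundant sets from below (the parameter $\ir$), which is structurally a different task from bounding \emph{all} irredundant sets from above (the parameter $\IR$), and $t=3$ with every $b_i\ge 3$ is precisely the non-bipartite case where the real work lies. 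Your proposal contains no actual case analysis for it, and the difficulty is not cosmetic: the machinery this paper develops in Lemmas \ref{same last coord} and \ref{same assoc vector} -- choosing one private neighbor per vertex and analyzing which associated vectors can coincide -- yields only bounds of the form $\IR(G)\le\alpha(G)+\text{error}$ (Theorem \ref{up dom bd 1}), not equality. So the $t\le 3$ case, which is the heart of the cited result, remains unproved in your write-up.
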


In fact, the original result stated only that $\alpha(G) = \Gamma(G)$, but their methods never use the hypothesis that $D$ is dominating, only that $D$ is irredundant.  In this section, we provide upper bounds for $\IR(G)$ in products of balanced, complete multipartite graphs not covered by Theorem \ref{DI upper}.

Fix an irredundant set $S \subseteq V(G)$. If $v \in S$ is isolated in $G[S]$, we say $v$ is {\it lonely}; otherwise, we say $v$ is {\it social}.  Let $\Lon(S)$ denote the set of lonely vertices in $S$, and let $\Soc(S)$ denote the set of social vertices in  $S$.  Observe that if $v \in S$ is social, then $\pn[v;S]\subseteq V(G) \cut S$.

As we did in the case of products of complete graphs, it is often useful to associate a $t$-tuple of integers to each vertex in $\prod_{i = 1}^t K[a_i,b_i]$.  Label the vertices of $K[a_i,b_i]$ by elements of $\Z/a_ib_i\Z$, where two vertices are adjacent if they are not congruent modulo $b_i$.  Associate to $v \in \prod_{i = 1}^t K[a_i,b_i]$ a vector of integers $(v(1),\dots,v(t))$, where the $v(i)$ is the element of $\{0,\dots,a_ib_i - 1\}$ associated to the projection of $v$ onto the $i^{th}$ coordinate.  Let $p(v)$ be the vector $(v(1) \mod b_1, v(2) \mod b_2, \dots, v(t) \mod b_t)$.  Note that vertices $u$ and $v$ are adjacent in $\prod_{i = 1}^t K[a_i,b_i]$ if and only if $p(u)$ differs from $p(v)$ in every coordinate.

\begin{lem}\label{same last coord}
Let $G = \prod_{i = 1}^t K[a_i,b_i]$, where $2 \leq b_1 \leq \cdots \leq b_t$. Let $S$ be an irredundant set in $G$, and let $T \subseteq \pn[S]$ be chosen so that each vertex of $S$ has exactly one private neighbor in $T$.  If there exist three vertices $v_1,v_2,v_3 \in T$ such that the vectors $p(v_1),p(v_2),p(v_3)$ vary in exactly one coordinate, then $v_1$, $v_2$, and $v_3$ are lonely vertices of $S$.
\end{lem}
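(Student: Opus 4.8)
The plan is to show that each of $v_1,v_2,v_3$ actually lies in $S$, after which irredundancy will force it to be lonely. For $j\in\{1,2,3\}$ let $s_j\in S$ be the unique vertex of $S$ with $v_j\in\pn[s_j;S]$ (uniqueness holds because $v_j\in\pn[s_j;S]$ forces $v_j\notin N[s']$ for every other $s'\in S$, so $v_j$ cannot privately dominate two vertices). Since each vertex of $S$ has exactly one private neighbor in $T$ and $v_1,v_2,v_3$ are distinct, the $s_j$ are distinct. I would first record the reduction: if $v_j\in S$, then in fact $v_j=s_j$, for otherwise $v_j\in S\setminus\{s_j\}\subseteq N[S\setminus\{s_j\}]$, contradicting $v_j\in\pn[s_j;S]$; and $v_j=s_j$ together with $v_j\notin N[S\setminus\{s_j\}]$ says $s_j$ is adjacent to no other vertex of $S$, i.e.\ $s_j$ is lonely. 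Hence it suffices to prove $v_j=s_j$ for each $j$.

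Next I would pass to the vectors $p(\cdot)$, using that $u$ and $w$ are adjacent exactly when $p(u)$ and $p(w)$ differ in all $t$ coordinates. Let $i^*$ be the coordinate in which $p(v_1),p(v_2),p(v_3)$ vary, so these three vectors agree in every coordinate $i\neq i^*$ and take \emph{pairwise distinct} values at $i^*$. The membership $v_j\in\pn[s_j;S]$ unpacks into two facts I will use repeatedly: $v_j\in N[s_j]$, and, for every $k\neq j$, $v_j\notin N[s_k]$ (because $s_k\in S\setminus\{s_j\}$), so that $p(v_j)$ and $p(s_k)$ agree in at least one coordinate.

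The heart of the argument is a single contradiction. Suppose some $v_j\neq s_j$; by the symmetry of the hypothesis take $j=1$. Then $v_1\in N[s_1]$ with $v_1\neq s_1$ forces $v_1$ adjacent to $s_1$, so $p(v_1)$ and $p(s_1)$ differ in \emph{every} coordinate. Now for each $i\neq i^*$ we have $p(v_2)(i)=p(v_1)(i)\neq p(s_1)(i)$, so $p(v_2)$ and $p(s_1)$ can agree only at $i^*$; since $v_2\notin N[s_1]$ they must agree somewhere, forcing $p(v_2)(i^*)=p(s_1)(i^*)$. The identical computation with $v_3$ gives $p(v_3)(i^*)=p(s_1)(i^*)$, whence $p(v_2)(i^*)=p(v_3)(i^*)$. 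This contradicts the pairwise distinctness of the three vectors at $i^*$. Therefore $v_1=s_1$, and by symmetry $v_2=s_2$ and $v_3=s_3$; the reduction of the first paragraph then shows all three are lonely.

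The step I expect to be most delicate is pinning down the correct reading of \emph{vary in exactly one coordinate}: the argument genuinely needs the three values $p(v_1)(i^*),p(v_2)(i^*),p(v_3)(i^*)$ to be pairwise distinct, not merely "not all equal.'' If instead two of the $p(v_j)$ coincided entirely, the contradiction evaporates—indeed one can then exhibit an irredundant $S$ in which the remaining $v_j$ is a private neighbor lying outside $S$, hence not lonely—so the hypothesis must be understood to supply three distinct $p$-vectors differing only in the single coordinate $i^*$. Aside from this point, the only care needed is the routine bookkeeping that distinct elements of $T$ have distinct owners $s_j$ and that, for a private neighbor, $v_j\in S$ is equivalent to $v_j=s_j$ being lonely, both immediate from the definitions.
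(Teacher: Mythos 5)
Your proof is correct, and it takes a cleaner route than the paper's. The paper's proof adopts the same reading of the hypothesis that you identify as essential (the three vectors $p(v_1),p(v_2),p(v_3)$ are pairwise distinct, normalized to $(0,u)$, $(1,u)$, $(2,u)$, so your interpretation matches the intended one), and then splits into three cases according to how many of $v_1,v_2,v_3$ fail to be lonely (all three, exactly two, exactly one), deriving a separate contradiction in each case; the recurring engine is the observation that any vertex adjacent to one of the $v_j$ must be adjacent to at least two of them, because off the varying coordinate it differs from all three vectors, and at that coordinate it can match at most one of the pairwise distinct values. Your proof uses exactly this pigeonhole kernel, applied to the owner $s_1$, but it avoids the case split entirely: by reducing the conclusion to the claim that each $v_j$ equals its owner $s_j$, and, crucially, by extracting the non-adjacency of $v_2$ and $v_3$ to $s_1$ from the private-neighbor property itself ($v_k \notin N[S\setminus\{s_k\}] \ni s_1$, valid whether or not $v_k$ is lonely) rather than from a loneliness assumption, you make a single contradiction do the work of the paper's three. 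You also make explicit a dichotomy the paper leaves implicit: an element of $T$ is either a lonely vertex of $S$ (equal to its owner) or lies outside $S$ with exactly one neighbor in $S$. Nothing is lost in the streamlining --- the two arguments rest on the same idea, yours just packages it more efficiently --- and your closing caveat about the hypothesis is well taken, since under the weaker reading in which two of the $p(v_j)$ coincide one can indeed build irredundant sets (e.g.\ in $K[2,3]\times K[2,3]$) where the third vertex is a non-lonely private neighbor, so the pairwise-distinctness reading is genuinely needed.
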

\begin{proof}
\vspace{-0.1cm}
Without loss of generality, assume $p(v_1) = \{0\} \times \{u\}$, $p(v_2) = \{1\} \times \{u\}$ and $p(v_3) = \{2\} \times \{u\}$ for some vector $u$ of length $t-1$.  Observe that any vertex $w$ adjacent to at least one of $v_1,v_2,v_3$ is adjacent to at least two of $v_1,v_2,v_3$.  Therefore, by our assumption that each vertex of $S$ has exactly one private neighbor in $T$, we cannot have all three of $v_1,v_2,v_3 \in \pn[S] \cut \Lon[S]$.

Suppose exactly two of the vertices $v_1,v_2,v_3$ are not lonely; without loss of generality assume $v_1,v_2 \in \pn[S] \cut \Lon[S]$ and $v_3 \in \Lon[S]$.  Let $u_1$ be the vertex of $S$ adjacent to $v_1$ and let $u_2$ be the vertex of $S$ adjacent to $v_2$.  Since $u_1$ is not adjacent to $v_2$ and $u_2$ is not adjacent to $v_1$, we have $p(u_1) = \{1\} \times \{u'\}$ and $p(u_2) = \{0\} \times \{u''\}$ for some $(t-1)$-tuples $u',u''$ differing from $u$ in every coordinate.  However, $u_1$ and $u_2$ are both adjacent to $v_3$, contradicting that $v_3$ is lonely.

Now, suppose exactly one of the vertices $v_1,v_2,v_3$  is not a lonely vertex of $S$; without loss of generality assume $v_1 \in \pn[S]\cut\Lon[S]$ and $v_2,v_3 \in \Lon[S]$. Note that there is a vertex $w \in S$ such that $p(w) \in \{1,2,\dots,b_1 - 1\} \times \{u'\}$ for some $(t-1)$-tuple $u'$ that differs from $u$ in every coordinate.  However, at least one of $v_2$ or $v_3$ is adjacent to $w$, contradicting that $v_2$ and $v_3$ are lonely.  Therefore, $v_1,v_2,v_3 \in \Lon[S]$.
\end{proof}

\begin{lem}\label{same assoc vector}
Let $G = \prod_{i = 1}^t K[a_i,b_i]$, where $2 \leq b_1 \leq \cdots \leq b_t$.  Let $S$ be an irredundant set in $G$, and let $T \subseteq \pn[S]$ be chosen so that each vertex of $S$ has exactly one private neighbor in $T$. If there are two vertices $v_1,v_2 \in T$ such that $p(v_1) = p(v_2)$, then $v_1$ and $v_2$ are lonely vertices of $S$. 
\end{lem}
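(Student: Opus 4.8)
The plan is to exploit the rigidity forced by the hypothesis $p(v_1) = p(v_2)$. Since adjacency in $G$ depends only on the associated vectors---two vertices $u,w$ are adjacent precisely when $p(u)$ and $p(w)$ differ in every coordinate---the equality $p(v_1) = p(v_2)$ means that $v_1$ and $v_2$ are twins: for every $w \in V(G)\cut\{v_1,v_2\}$, $w$ is adjacent to $v_1$ if and only if $w$ is adjacent to $v_2$. Isolating this twin property is the engine of the argument, and it is what makes the present lemma cleaner than Lemma \ref{same last coord}, where the three private neighbors agreed only up to a single coordinate and the weaker ``adjacent to at least two'' observation was required.

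First I would fix notation for the owners of $v_1$ and $v_2$. Because $T$ contains exactly one private neighbor of each vertex of $S$, and the sets $\pn[u;S]$ for distinct $u \in S$ are pairwise disjoint, each of $v_1,v_2$ is the private neighbor of a unique vertex of $S$; call these $u_1$ and $u_2$. Since $v_1 \neq v_2$ are distinct members of $T$ and each vertex of $S$ owns only one element of $T$, we must have $u_1 \neq u_2$. Next I would rule out the possibility that $u_1$ is social, arguing by contradiction. Suppose $u_1 \in \Soc(S)$. Then, as recorded before the lemma, $\pn[u_1;S] \subseteq V(G) \cut S$, so $v_1 \neq u_1$; together with $v_1 \in N[u_1]$ this forces $v_1$ to be adjacent to $u_1$. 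Meanwhile $v_2 \in \pn[u_2;S] = N[u_2] \cut N[S \cut \{u_2\}]$ and $u_1 \in S \cut \{u_2\}$, so $v_2 \notin N[u_1]$; in particular $v_2$ is not adjacent to $u_1$ and $u_1 \neq v_2$. By the twin property applied to $w = u_1$, the adjacency of $u_1$ to $v_1$ then yields that $u_1$ is adjacent to $v_2$ as well---a contradiction. Hence $u_1$ is lonely, so $v_1$ is a lonely vertex of $S$. Exchanging the roles of $v_1$ and $v_2$ shows that $u_2$ is likewise lonely, completing the proof.

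I do not anticipate a serious obstacle here: once the twin property is isolated, the contradiction is immediate, and unlike in Lemma \ref{same last coord} no case analysis on how many of the $v_i$ are lonely is needed. The only points demanding care are the bookkeeping that $u_1 \neq u_2$ (so that $u_1$ genuinely lies in $S \cut \{u_2\}$ and may be fed into the private-neighbor condition for $v_2$) and the standard fact that a social vertex is adjacent to each of its private neighbors; both are routine from the definitions.
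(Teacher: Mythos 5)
Your engine---the twin property $N(v_1)=N(v_2)$ together with the fact that distinct vertices of $S$ own distinct private neighbors in $T$---is exactly what the paper's proof runs on, and your contradiction argument against $u_1$ being social is correct as far as it goes. But the proof stops one step short of the stated conclusion, and the final inference is a non sequitur. The lemma claims that $v_1$ and $v_2$ \emph{themselves} are lonely vertices of $S$, i.e.\ $v_1,v_2 \in \Lon(S) \subseteq S$; this is what Theorem \ref{up dom bd 1} needs when it identifies $|T \cut \Lon[S]|$ with $|\Soc(S)|$. What you actually prove is that the \emph{owners} $u_1,u_2$ are lonely. The step ``$u_1$ is lonely, so $v_1$ is a lonely vertex of $S$'' does not follow: a lonely vertex $u_1$ is its own private neighbor, but it can also have private neighbors outside $S$ (any neighbor of $u_1$ lying outside $N[S \cut \{u_1\}]$), and nothing in the hypothesis on $T$ forces the chosen representative $v_1$ to be $u_1$ itself. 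If $v_1 \neq u_1$, then $v_1 \notin S$ (a private neighbor of $u_1$ lying in $S$ must equal $u_1$), so $v_1$ is not a lonely vertex of $S$ no matter how lonely $u_1$ is. Your proof never rules this configuration out, since your case analysis is on whether $u_1$ is social rather than on whether $v_1 = u_1$.

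The repair is immediate and uses only what you already wrote, because social-ness of $u_1$ enters your argument solely to give $v_1 \neq u_1$. So suppose directly that $v_1 \neq u_1$. Then $v_1 \in \pn[u_1;S] \subseteq N[u_1]$ forces $v_1$ adjacent to $u_1$. Also $u_1 \neq v_2$: otherwise $v_2 \in S \cap \pn[u_2;S]$, which forces $v_2 = u_2$, contradicting $u_1 \neq u_2$. Hence the twin property applies with $w = u_1$ and gives $u_1$ adjacent to $v_2$; but $u_1 \in S \cut \{u_2\}$ and $v_2 \in \pn[u_2;S] = N[u_2] \cut N[S \cut \{u_2\}]$ give $v_2 \notin N[u_1]$, a contradiction. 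Therefore $v_1 = u_1$, and then $v_1 \in S$ with $v_1 \in \pn[v_1;S]$ means $v_1 \notin N[S \cut \{v_1\}]$, i.e.\ $v_1$ is isolated in $G[S]$ and hence lonely; symmetrically for $v_2$. With this one change your argument coincides with the paper's proof, which assumes $v_1 \in T \cut \Lon[S]$ (equivalently, $v_1$ adjacent to its owner) and derives the same contradiction via the shared neighborhood.
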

\begin{proof}
Note that $N(v_1) = N(v_2)$.  Suppose, seeking a contradiction, that $v_1 \in T \cut \Lon[S]$.  Then there exists some $w \in S$ such that $w$ is adjacent to $v_1$ and $v_2$, so $v_2$ cannot be lonely.  Hence $v_2$ must also be a private neighbor of some vertex in $S$, but as is it already a neighbor of $w \in S$, this contradicts our assumption that each vertex in $S$ has a unique private neighbor in $T$.
\end{proof}

\begin{thm}\label{up dom bd 1}
If $G = \prod_{i = 1}^t K[a_i,b_i]$, where $2 \leq b_1 \leq \cdots \leq b_t$, then 
$$\IR(G) \leq \frac{1}{b_1}\prod_{i=1}^t a_ib_i + \frac{2}{b_t} \prod_{i = 1}^t b_i.$$
\end{thm}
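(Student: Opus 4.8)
The plan is to fix an irredundant set $S$ with $|S| = \IR(G)$ and choose, for each $v \in S$, a single private neighbor $f(v) \in \pn[v;S]$, setting $T = \{f(v) : v \in S\}$. Since the sets $\pn[v;S]$ are pairwise disjoint, $f$ is injective and $|T| = |S| = \IR(G)$, and $T$ satisfies the hypothesis of Lemmas \ref{same last coord} and \ref{same assoc vector}. I would then split $T$ according to the source vertex, writing $T = f(\Lon(S)) \sqcup f(\Soc(S))$, and bound the two pieces separately; the first will contribute the term $\frac1{b_1}\prod_{i=1}^t a_i b_i$ and the second the term $\frac2{b_t}\prod_{i=1}^t b_i$.

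For the lonely part, I would use that $\Lon(S)$ is an independent set: any two lonely vertices are isolated in $G[S]$, hence nonadjacent. Therefore $|f(\Lon(S))| = |\Lon(S)| \le \alpha(G)$. The independence number of $G$ equals $\frac1{b_1}\prod_{i=1}^t a_i b_i$, the size of the independent set obtained by fixing the residue of the first coordinate modulo $b_1$; this is maximal because $b_1$ is smallest, and the matching upper bound $\alpha(G) \le \frac1{b_1}\prod_{i=1}^t a_i b_i$ follows from partitioning $V(G)$ into cliques of size $b_1$. This gives $|f(\Lon(S))| \le \frac1{b_1}\prod_{i=1}^t a_i b_i$.

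For the social part, I would partition the set of possible $p$-vectors $\prod_i \Z/b_i\Z$ into $\prod_{i<t} b_i$ \emph{lines} parallel to the last coordinate, where a line consists of the $b_t$ vectors sharing a fixed value in coordinates $1,\dots,t-1$. I claim each line contains at most two vertices of $f(\Soc(S))$. Indeed, if three social private neighbors $w_1,w_2,w_3$ had $p$-vectors in a single line, then either their $p$-vectors are distinct---so they vary in exactly the last coordinate and Lemma \ref{same last coord} forces them to be lonely---or two of them share a $p$-vector and Lemma \ref{same assoc vector} forces those two to be lonely; either way we contradict that $w_1,w_2,w_3$ are private neighbors of social vertices. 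I choose the last coordinate precisely because $b_t$ is largest, which minimizes the number of lines. Summing over the $\prod_{i<t} b_i = \frac1{b_t}\prod_{i=1}^t b_i$ lines gives $|f(\Soc(S))| \le \frac2{b_t}\prod_{i=1}^t b_i$, and adding the two bounds yields $\IR(G) = |T| \le \frac1{b_1}\prod_{i=1}^t a_i b_i + \frac2{b_t}\prod_{i=1}^t b_i$.

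The main obstacle is the lonely bound: the inequality $|\Lon(S)| \le \alpha(G) = \frac1{b_1}\prod_{i=1}^t a_i b_i$ is exactly what forces the first term, so I must have the value of $\alpha(G)$ in hand, either as a cited fact for these products or via the explicit $b_1$-clique partition sketched above. The social bound, by contrast, is a clean consequence of the two lemmas once one observes that collinearity along a single coordinate (or coincidence of $p$-vectors) forces loneliness; the only care needed there is to slice along the coordinate with the largest $b_i$ and to dispatch the degenerate case of repeated $p$-vectors via Lemma \ref{same assoc vector}.
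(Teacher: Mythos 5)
Your proposal is correct and follows essentially the same route as the paper: the same decomposition of an irredundant set into lonely and social vertices, with $|\Lon(S)| \leq \alpha(G) = \frac{1}{b_1}\prod_{i=1}^t a_ib_i$ and the social private neighbors bounded by two per ``line'' in the last coordinate via Lemmas \ref{same last coord} and \ref{same assoc vector}. Your write-up is in fact slightly more careful than the paper's in two spots---you justify the value of $\alpha(G)$ by a clique partition rather than asserting it, and you bound $|f(\Soc(S))|$ directly instead of writing $|\Soc(S)| = |T \setminus \Lon(S)|$, which strictly speaking requires choosing each lonely vertex as its own private neighbor---but these are refinements of the identical argument.
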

\begin{proof}
Suppose $S \subseteq V(G)$ is irredundant. Observe that $|\Lon(S)|\leq \alpha(G)=\frac{1}{b_1}\prod_{i=1}^t a_ib_i$.  Let $T \subseteq \pn[S]$ be chosen so that each vertex of $S$ has exactly one private neighbor in $T$.  By Lemma \ref{same assoc vector}, no two vertices of $T \cut \Lon[S]$ have the same associated vector.  Moreover, by Lemma \ref{same last coord}, no three vertices of $T \cut \Lon[S]$ have associated vectors which agree in all but the last coordinate.  Hence 
$$|\Soc(S)|  = |T \cut \Lon[S]| \leq \frac{2}{b_t}\prod_{i = 1}^t b_i.$$
We can conclude
$$|S| = |\Lon(S)| + |\Soc(S)| \leq \frac{1}{b_1}\prod_{i=1}^t a_ib_i + \frac{2}{b_t}\prod_{i = 1}^t b_i.\qedhere$$
\end{proof}
In particular, for the case of unitary Cayley graphs of $\Z/n\Z$, we have the following bound.

\begin{cor}
If $n = p_1^{\alpha_1}p_2^{\alpha_2}\cdots p_t^{\alpha_t}$, where $p_1 < \cdots < p_t$ are primes, then
$$\IR(X_n) \leq \left(1 + 2 \cdot \frac{p_1}{p_t}\cdot \frac{1}{p_1^{\alpha_1-1}p_2^{\alpha_2-1}\cdots p_t^{\alpha_t - 1}}\right)\alpha(X_n).$$
\end{cor}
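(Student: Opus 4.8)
The plan is to recognize this statement as a direct specialization of Theorem \ref{up dom bd 1}. Recall from the introduction that the Chinese remainder theorem gives $X_n \isom \prod_{i=1}^t K[p_i^{\alpha_i-1}, p_i]$, so I would set $a_i = p_i^{\alpha_i-1}$ and $b_i = p_i$ for each $i$. Since $p_1 < \cdots < p_t$, the hypothesis $2 \leq b_1 \leq \cdots \leq b_t$ of Theorem \ref{up dom bd 1} holds, with $b_1 = p_1$ and $b_t = p_t$; verifying that the smallest and largest $b_i$ coincide with $p_1$ and $p_t$ is what guarantees that the $1/b_1$ and $2/b_t$ factors match the $p_1$ and $p_t$ appearing in the claimed inequality.

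Substituting these values into the bound of Theorem \ref{up dom bd 1}, the first term becomes $\frac{1}{p_1}\prod_{i=1}^t p_i^{\alpha_i-1}p_i = \frac{1}{p_1}\prod_{i=1}^t p_i^{\alpha_i} = \frac{n}{p_1}$, while the second term becomes $\frac{2}{p_t}\prod_{i=1}^t p_i$. The key arithmetic input I would then record is that $\alpha(X_n) = \frac{1}{b_1}\prod_{i=1}^t a_i b_i = \frac{n}{p_1}$; this is precisely the independence number used in the lonely-vertex count inside the proof of Theorem \ref{up dom bd 1}, so it carries over directly.

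Finally, I would factor $\alpha(X_n) = n/p_1$ out of the two-term bound. The remaining ratio in the social term simplifies using $\frac{\prod_{i=1}^t p_i}{n} = \frac{1}{\prod_{i=1}^t p_i^{\alpha_i-1}}$, which turns the bound into $\alpha(X_n)\bigl(1 + \tfrac{2p_1}{p_t}\cdot\tfrac{1}{p_1^{\alpha_1-1}\cdots p_t^{\alpha_t-1}}\bigr)$, exactly the stated inequality. There is no genuine obstacle here beyond bookkeeping: the whole argument is the substitution $a_i = p_i^{\alpha_i-1}$, $b_i = p_i$ followed by the algebraic simplification, and the only point requiring care is the correct identification of $\alpha(X_n)$ as $n/p_1$.
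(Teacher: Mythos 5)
Your proposal is correct and is exactly the paper's (implicit) argument: the corollary is stated as a direct specialization of Theorem \ref{up dom bd 1} via $X_n \isom \prod_{i=1}^t K[p_i^{\alpha_i-1},p_i]$, i.e.\ $a_i = p_i^{\alpha_i-1}$, $b_i = p_i$, together with the identification $\alpha(X_n) = n/p_1$ and the algebraic simplification you carried out. No gaps.
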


If we look at sets of integers with a bounded smallest prime factor, we can show that $\IR(X_n)$ and $\alpha(X_n)$ are asymptotically the same. An integer $x$ is called {\it $r$-rough} (or {\it $r$-jagged}) for a positive integer $r$ if every prime factor of $x$ is at least $r$. Let $D_r = \{x \in \N: \exists d \in \{2,3,\dots,r\} \text{ such that } d \mid x\}$; that is, $D_r$ is the set of positive integers that are not $(r+1)$-rough.

\begin{cor}
For each $r \in \N$, we have
$$\lim_{\substack{n \in D_r \\ n \to \infty}} \frac{\IR(X_n)}{\alpha(X_n)} = 1.$$
\end{cor}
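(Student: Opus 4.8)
The plan is to squeeze the ratio between two bounds. The lower bound is free: the domination chain gives $\alpha(X_n)\le\Gamma(X_n)\le\IR(X_n)$, so $\IR(X_n)/\alpha(X_n)\ge 1$ for every $n$. The upper bound is exactly the preceding corollary, which gives $\IR(X_n)/\alpha(X_n)\le 1+E(n)$, where I write $E(n)=2\cdot\frac{p_1}{p_t}\cdot\frac{1}{p_1^{\alpha_1-1}\cdots p_t^{\alpha_t-1}}$ and $n=p_1^{\alpha_1}\cdots p_t^{\alpha_t}$ with $p_1<\cdots<p_t$. By the squeeze theorem it therefore suffices to prove that $E(n)\to 0$ as $n\to\infty$ through $D_r$.

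First I would unwind the hypothesis $n\in D_r$: since some $d\in\{2,\dots,r\}$ divides $n$ exactly when $n$ has a prime factor at most $r$, membership in $D_r$ is equivalent to $p_1\le r$. I would also rewrite the error term in a more usable form. Since $\prod_{i=1}^t p_i^{\alpha_i-1}=n/\prod_{i=1}^t p_i$, we have $E(n)=\frac{2p_1}{p_t}\cdot\frac{\prod_{i=1}^t p_i}{n}$.

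The heart of the argument is to show, for each fixed $\epsilon>0$, that only finitely many $n\in D_r$ satisfy $E(n)\ge\epsilon$; this is equivalent to $E(n)\to 0$. I would do this in two steps. Dropping the factor $1/\prod_i p_i^{\alpha_i-1}\le 1$ and using $p_1\le r$ gives the crude bound $E(n)\le 2p_1/p_t\le 2r/p_t$, so $E(n)\ge\epsilon$ forces $p_t\le 2r/\epsilon=:C$. Once $p_t$ is bounded by $C$, all of the primes $p_1,\dots,p_t$ are at most $C$, so their product $P:=\prod_{i=1}^t p_i$ is bounded by the (finite) product of all primes up to $C$, a constant depending only on $r$ and $\epsilon$. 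Feeding this back into $E(n)=\frac{2p_1}{p_t}\cdot\frac{P}{n}\ge\epsilon$ and using $p_1\le r$ and $p_t\ge 2$ yields $n\le rP/\epsilon$, a bound independent of $n$. Hence $\{n\in D_r:E(n)\ge\epsilon\}$ is finite, which completes the squeeze.

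I expect the main obstacle to be the second step. The naive estimate $E(n)\le 2r/p_t$ is not enough on its own, because $p_t$ need not tend to infinity as $n\to\infty$ within $D_r$ — for instance, $n$ could grow through ever-larger powers of a fixed small set of primes. The key realization is that bounding $E(n)$ from below first \emph{forces} $p_t$ (and hence the whole radical $\prod_i p_i$) to be bounded, after which the same inequality bounds $n$ itself; it is this conversion of a lower bound on $E(n)$ into an absolute upper bound on $n$ that drives the limit, rather than any need for $p_t\to\infty$.
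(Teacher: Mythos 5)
Your proof is correct and takes essentially the same approach as the paper: squeeze $\IR(X_n)/\alpha(X_n)$ between $1$ (from the domination chain) and the bound $1 + 2\,\frac{p_1}{p_t}\cdot\frac{1}{p_1^{\alpha_1-1}\cdots p_t^{\alpha_t-1}}$ of the preceding corollary, using $n\in D_r \iff p_1\le r$, and show the error term vanishes. The only difference is that the paper simply asserts the key fact that $p_t\prod_{i=1}^t p_i^{\alpha_i-1}\to\infty$ as $n\to\infty$, whereas you prove the equivalent statement in contrapositive form (a lower bound on the error term forces $p_t$, then the radical, then $n$ itself to be bounded), which is a legitimate and worthwhile filling-in of that step.
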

\begin{proof}
Let $n = p_1^{\alpha_1}\cdots p_n^{\alpha_n}$, where $p_1 < \cdots < p_n$ are primes.  Hence $X_n \cong \prod_{i = 1}^t K[a_i,b_i]$, where $a_i = p^{\alpha_i - 1}$ and $b_i = p_i$.  By the condition $n \in D_r$, we know $b_1 \leq r$.  Hence 
$$\frac{\IR(X_n)}{\alpha(X_n)}  \leq 1 + \frac{2b_1}{b_t\prod_{i=1}^t a_i} \leq 1 + \frac{2r}{b_t\prod_{i=1}^t a_i}.$$
Note that $b_t \prod_{i = 1}^t a_i$ tends to infinity as $n$ tends to infinity.  Since $\IR(X_n) \geq \alpha(X_n)$ for all $n$, the limit goes to $1$.
\end{proof}

We note that the bound in Theorem \ref{up dom bd 1} is trivial if $a_1 = \cdots = a_t = 1$ and  $b_1 =\cdots =  b_t = 3$, that is, when $G$ is a direct product of triangles.  Defant and Iyer \cite{DI} drew attention to direct products of triangles as a ``particularly attractive" special case of their conjecture concerning the value of the upper domination number for direct products of balanced, multipartite graphs.

\begin{conj}\emph{(\cite{DI})}
If $G = \prod_{i = 1}^t K[a_i,b_i]$ with $2 \leq b_1 \leq \cdots \leq b_t$, then $\Gamma(G) = \alpha(G)$.
\end{conj}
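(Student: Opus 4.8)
The plan is to prove the non-trivial inequality $\Gamma(G) \le \alpha(G)$, since $\alpha(G) \le \Gamma(G)$ holds for every graph (any maximal independent set is a minimal dominating set). So I would fix a minimal dominating set $D$ and aim to produce an independent set of size at least $|D|$. Recall that a minimal dominating set is automatically irredundant, since each $v \in D$ must have a private neighbor (otherwise $D \cut \{v\}$ would still dominate). Thus the lonely/social machinery applies to $D$.

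Write $D = \Lon(D) \cup \Soc(D)$. A lonely vertex has no neighbor in $D$, so no two lonely vertices are adjacent; hence $\Lon(D)$ is independent and $|\Lon(D)| \le \alpha(G)$. For the social part, each $v \in \Soc(D)$ has $\pn[v;D] \subseteq V(G) \cut D$, so choose a transversal $T = \{t_v : v \in \Soc(D)\}$ with $t_v \in \pn[v;D]$. A given vertex can be a private neighbor of at most one social vertex (its only possible $D$-neighbor is pinned down), so $T$ is injective and $|T| = |\Soc(D)|$, and $T \cap \Lon(D) = \emptyset$. The crucial observation is that $I := \Lon(D) \cup T$ has no edges between its two pieces: if $t_v$ were adjacent (or equal) to a lonely vertex $\ell$, then since $\ell \in D \cut \{v\}$ this would contradict $t_v \in N[v] \cut N[D \cut \{v\}]$. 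Hence every edge of $G[I]$ lies inside $T$, and $|I| = |D|$. This reduces the conjecture to a clean statement: for a minimal dominating set, the private-neighbor transversal $T$ of the social vertices can be chosen to be independent.

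To attack that reduction I would exploit the blow-up structure noted before Lemma \ref{same last coord}: with $H = \prod_{i=1}^t K_{b_i}$ and $m = \prod_{i=1}^t a_i$, the graph $G$ is the lexicographic product $H[\overline{K_m}]$, so each fiber $F_c$, $c \in V(H)$, is an independent set of size $m$, and $F_c, F_{c'}$ are completely joined exactly when $c, c'$ differ in every coordinate. For a social $v$ with $t_v \in F_c$, the private-neighbor condition forces the only $D$-occupied fiber adjacent to $c$ to be the fiber of $v$, and that fiber to meet $D$ only in $v$. This rigidity, together with Lemmas \ref{same last coord} and \ref{same assoc vector} (which already forbid coincidences among private neighbors unless the vertices are lonely), should constrain the adjacencies within $T$ enough to permit an independent choice, perhaps after rerouting $t_v$ inside $F_c$ (any vertex of $F_c \cut D$ is an equally valid private neighbor, and fibers are independent). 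I would try to realize ``independent transversal exists'' via a Hall-type or exchange argument on the conflict graph of $T$, using minimality of $D$ to rule out obstructions.

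The hard part will be the cases where the counting bound of Theorem \ref{up dom bd 1} is useless, most strikingly the direct products of triangles ($a_i = 1$, $b_i = 3$, so $m = 1$), where there is no freedom to reroute $t_v$ within its fiber and the independent-transversal step must be carried out honestly. Here I would use the proper colouring of $G$ by the last coordinate, whose colour classes (layers) $L_j = \{v : v(t) \equiv j \bmod b_t\}$ are independent sets — maximum ones when all $b_i$ are equal, as for triangles — with edges running only between layers, where the bipartite adjacency is governed by the smaller product $\prod_{i<t} K[a_i,b_i]$. This suggests an induction on $t$ with base cases $t \le 3$ supplied by Theorem \ref{DI upper}. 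The obstruction I anticipate is that a minimal dominating set of $G$ need not restrict to a minimal dominating set on the layers, so the inductive hypothesis about $\Gamma$ cannot be invoked verbatim; it will likely have to be recast as the stronger assertion that the social private-neighbour transversal can be chosen independent, and then shown to be stable under the layer decomposition. I regard controlling the social transversal in the triangle case as the genuine crux, and I would not be surprised if it required an entirely separate combinatorial idea.
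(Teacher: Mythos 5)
The statement you are attacking is not proved in the paper at all: it appears there as an open conjecture, quoted from Defant and Iyer \cite{DI}, and the paper itself only obtains partial results toward it --- the cases $b_1 = 2$ and $t \le 3$ (Theorem \ref{DI upper}, due to \cite{DI}) and the upper bounds $\IR(G) \le \frac{1}{b_1}\prod_i a_ib_i + \frac{2}{b_t}\prod_i b_i$ (Theorem \ref{up dom bd 1}) and $\IR(G) \le \frac{b_1}{2b_1-1}\prod_i a_ib_i$, both strictly weaker than $\Gamma(G) = \alpha(G)$. So there is no paper proof to compare against; the only question is whether your argument settles the conjecture, and it does not.

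Your reduction is sound, but it is essentially the decomposition the paper already exploits rather than a new idea: $\alpha(G) \le \Gamma(G)$ is standard; a minimal dominating set $D$ is irredundant; $\Lon(D)$ is independent; social vertices have pairwise distinct private neighbors outside $D$, none of which sees $\Lon(D)$; hence $\Lon(D) \cup T$ has size $|D|$ with all potential edges inside the transversal $T$. This is exactly the $\Lon$/$\Soc$ split in the proof of Theorem \ref{up dom bd 1}, where Lemmas \ref{same last coord} and \ref{same assoc vector} serve the same purpose --- and there the paper can only \emph{count} $|\Soc(S)| \le \frac{2}{b_t}\prod_i b_i$, not eliminate it. The genuine gap is the step you yourself flag as the crux: that $T$ can be chosen or rerouted to be independent. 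Nothing in Lemmas \ref{same last coord} and \ref{same assoc vector} forbids two social private neighbors from being adjacent; those lemmas only rule out equal or triply-coincident projections $p(\cdot)$, which is far weaker than independence. Your fallback, induction on $t$ via the layer decomposition, is blocked by the obstruction you concede (minimal dominating sets do not restrict to minimal dominating sets on layers), and in the product-of-triangles case ($a_i = 1$, $b_i = 3$) the rerouting freedom inside fibers vanishes entirely --- precisely the special case the paper singles out as the heart of the open problem. As written, your text is a correct reduction plus a research plan, not a proof; the conjecture remains open after it.
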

This case motivates the following theorem, which provides a nontrivial (though, usually worse) upper bound for the upper irredundance number of any direct product of balanced, complete multipartite graphs.

\begin{thm}
If $G = \prod_{i = 1}^t K[a_i,b_i]$, where $2 \leq b_1 \leq \cdots \leq b_t$, then
$$\IR(G) \leq \frac{b_1}{2b_1 - 1}\prod_{i=1}^t a_ib_i.$$
\end{thm}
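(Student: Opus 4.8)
The plan is to establish the \emph{stronger} bound $\IR(G) \le \tfrac12\prod_{i=1}^t a_ib_i$, from which the stated inequality follows at once since $\tfrac12 \le \frac{b_1}{2b_1-1}$ whenever $b_1 \ge 2$. Fix an irredundant set $S$ and decompose $S = \Lon(S)\sqcup\Soc(S)$ as in the proof of Theorem~\ref{up dom bd 1}. As in Lemmas~\ref{same last coord} and~\ref{same assoc vector}, choose $T\subseteq\pn[S]$ containing exactly one private neighbor of each vertex of $S$, and write $t_v$ for the chosen private neighbor of $v$. For a \emph{social} vertex $v$ we have $t_v\notin S$ and, directly from the definition of private neighbor, $t_v$ is adjacent to no vertex of $S$ other than $v$; that is, $N(t_v)\cap S=\{v\}$. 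In particular $v$ is recovered from $t_v$, so the map $v\mapsto t_v$ is injective on $\Soc(S)$, and $W:=\{t_v : v\in\Soc(S)\}$ is a subset of $V(G)\cut S$ with $|W|=|\Soc(S)|$.

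The key observation is that, writing $N(\Lon(S)):=\bigcup_{u\in\Lon(S)}N(u)$, the sets $N(\Lon(S))$ and $W$ are \emph{disjoint} subsets of $V(G)\cut S$. Indeed, every lonely vertex is isolated in $G[S]$, so $N(u)\cap S=\emptyset$ for $u\in\Lon(S)$, whence $N(\Lon(S))\subseteq V(G)\cut S$; and each $t_v\in W$ is adjacent only to the social vertex $v$ among all of $S$, so it is adjacent to no lonely vertex and therefore $t_v\notin N(\Lon(S))$. Counting in the complement gives
$$|N(\Lon(S))| + |\Soc(S)| = \bigl|N(\Lon(S))\sqcup W\bigr| \le |V(G)\cut S| = |V(G)| - |\Lon(S)| - |\Soc(S)|,$$
which rearranges to $|N(\Lon(S))| + |\Lon(S)| + 2|\Soc(S)| \le |V(G)|$.

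It remains to bound $|N(\Lon(S))|$ below by $|\Lon(S)|$, for which I would prove the neighbor-expansion fact $|N(X)|\ge|X|$ for \emph{every} $X\subseteq V(G)$. Let $\tau$ send $v$ to the vertex $\tau(v)$ with $\tau(v)(i)\equiv v(i)+1\pmod{a_ib_i}$ in each coordinate. Then $p(\tau(v))_i\equiv p(v)_i+1\not\equiv p(v)_i\pmod{b_i}$ since $b_i\ge 2$, so $p(\tau(v))$ differs from $p(v)$ in every coordinate and hence $\tau(v)\in N(v)$. As $\tau$ is a bijection we get $\tau(X)\subseteq N(X)$ and $|N(X)|\ge|\tau(X)|=|X|$. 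Taking $X=\Lon(S)$ gives $|N(\Lon(S))|\ge|\Lon(S)|$, and substituting into the inequality above yields $2|\Lon(S)| + 2|\Soc(S)| \le |V(G)|$, i.e. $|S|\le\tfrac12|V(G)|=\tfrac12\prod_{i=1}^t a_ib_i$, as claimed.

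The only genuinely delicate point is the disjointness $W\cap N(\Lon(S))=\emptyset$, which is precisely where irredundance is used: it rests entirely on the fact that a private neighbor of a social vertex is adjacent to that vertex alone among all of $S$. Everything else is routine — the expansion bound follows from the single coordinate-shift $\tau$ and needs no independence input, and the remaining count takes place in the complement. I also note that this route bypasses the two lemmas on associated vectors and lines; those would instead be the natural tools if one preferred to recover only the weaker constant $\frac{b_1}{2b_1-1}$ by bounding $|\Soc(S)|$ through the geometry of its private neighbors.
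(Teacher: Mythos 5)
Your proof is correct, and it takes a genuinely different route from the paper's --- in fact it establishes the strictly stronger bound $\IR(G) \le \frac{1}{2}\prod_{i=1}^t a_ib_i$, which implies the stated one since $\frac{1}{2} < \frac{b_1}{2b_1-1}$ for every $b_1 \ge 2$. The paper argues by contradiction with a double pigeonhole in the first coordinate: an irredundant set $S$ larger than $\frac{b_1}{2b_1-1}\prod_{i=1}^t a_ib_i$ must concentrate in some residue class $P_0$ modulo $b_1$, the private neighbors of $S \cut P_0$ must then concentrate in another class $Q_1$, and the set $R$ obtained by shifting $Q_1$ is adjacent to $Q_1$, hence disjoint from $P_0$, yet lies in the same fiber as $P_0$, overfilling it; the constant $\frac{b_1}{2b_1-1}$ is exactly the threshold at which that count becomes self-contradictory. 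You instead push the lonely/social decomposition (which the paper deploys only for Theorem \ref{up dom bd 1}) to a direct count inside $V(G) \cut S$: your key facts --- that $t_v \notin S$ and $N(t_v) \cap S = \{v\}$ for social $v$ --- are correctly derived from the definition of private neighbor, and they yield both the injectivity of $v \mapsto t_v$ and the disjointness of $W$ from $N(\Lon(S))$, while the diagonal shift $\tau$ supplies the expansion $|N(X)| \ge |X|$; the chain $|V(G)| - |S| \ge |N(\Lon(S))| + |W| \ge |\Lon(S)| + |\Soc(S)| = |S|$ then closes the argument, and every step checks out. What your route buys: a better constant, tightness when $b_1 = 2$ (where $\IR(G) = \alpha(G) = \frac{1}{2}\prod_{i=1}^t a_ib_i$ by Theorem \ref{DI upper}), and far greater generality, since the only structural input is a permutation $\tau$ of $V(G)$ with $\tau(v)$ adjacent to $v$ --- so the identical argument bounds $\IR$ by half the order in any Cayley graph with nonempty connection set, or any graph with a perfect matching, in the spirit of classical matching-type bounds on irredundance. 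What the paper's fiber-pigeonhole technique buys is scalability: it is the engine behind Theorem \ref{up dom bd 1}, whose bound $\frac{1}{b_1}\prod_{i=1}^t a_ib_i + \frac{2}{b_t}\prod_{i=1}^t b_i$ approaches $\alpha(G)$ and beats $\frac{1}{2}\prod_{i=1}^t a_ib_i$ once $b_1 \ge 3$ and the $a_i$ are large, a $\frac{1}{b_1}$ scaling your method cannot see. For the theorem as stated, however, your argument is both stronger and cleaner.
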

\begin{proof}
Let $S \subseteq V(G)$ have size greater than $\frac{1}{2 - \frac{1}{b_1}} \prod_{i=1}^t a_ib_i$. Suppose, seeking a contradiction, that $S$ is irredundant.

Let $P_j$ be the set of vertices $v \in S$ such that $v(1) = j$. By the Pigeonhole Principle, $|P_j| \geq \frac{1}{b_1}|S|$ for some $j \in \{0,\dots,b_1 -1\}$; without loss of generality, suppose $|P_0| \geq \frac{1}{b_1}|S|$.

For each $k \in \{1,\dots,b_1 - 1\}$, let $Q_k$ denote the set of private neighbors $u \in \bigcup_{v \in (S \cut P_0)} \pn[v,S]$ such that $u(1)= k$.  Observe that of the $|S| - |P_0|$ vertices in $S \cut P_0$, at most $\frac{1}{b_1}\prod_{i = 1}^t a_ib_i - |P_0|$ have private neighbors $u$ such that $u(1) = 0$.  By the Pigeonhole Principle, 
$$|Q_k| \geq \frac{1}{b_1 - 1}\left(|S| - |P_0| - |\{u \in \pn[S \cut P_0] : u(1) = 0\}|\right) \geq \frac{1}{b_1 - 1}\left(|S| - \frac{1}{b_1}\prod_{i = 1}^t a_ib_i\right)$$
for some $k \in \{1,\dots,b_1 - 1\}$; without loss of generality, assume $|Q_1| \geq \frac{1}{b_1 - 1}\left(|S| - \frac{1}{b_1}\prod_{i = 1}^t a_ib_i\right)$.  Let $R$ be the set of vertices $u$ such that $p(u) = (0,v(2)+ 1,\dots,v(t) + 1)$ for some $v \in Q_1$.  That is,
$$R = \bigcup_{v \in Q_1} p^{-1}((0,v(2) + 1,\dots,v(t) + 1)).$$
\begin{figure}[h]
    \centering
    \includegraphics[scale=0.4]{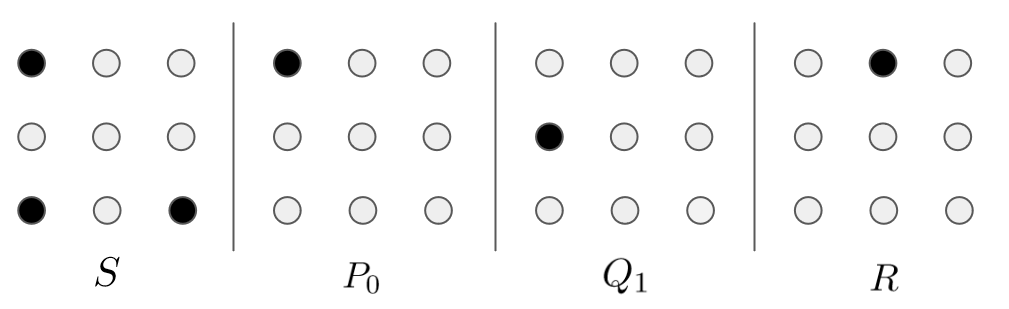}
    \caption{An example of how the sets $P_0$, $Q_1$, and $R$ are determined by a fixed irredundant set $S = \{(0,0),(2,0),(2,2)\}$ in $K_3 \times K_3$. For a vertex $v$ in the figure, $v(0)$ is given by its row and $v(1)$ by its column, hence two vertices are adjacent if and only if they lie in different rows and columns.   }
    \label{fib}
\end{figure}
Since the associated vectors are disjoint, each member of $R$ is adjacent to a member of $Q_1$.  Using the fact that $P_0$ is disjoint from $Q_1$ and that $Q_1$ consists of private neighbors of $S \cut P_0$, we conclude that $P_0$ and $R$ are disjoint; lest a member of $P_0$ is adjacent to a private neighbor of $S \cut P_0$.  Since $P_0$ and $R$ both consist of vectors $u$ satisfying $u_1 = 0$, we have
$$|P_0| + |R| \leq \frac{1}{b_1} \prod_{i = 1}^t a_ib_i.$$
However, we reach a contradiction by noting that
\begin{align*}
    |P_0| + |R| &= |P_0| + |Q_1|\\
    &\geq \frac{1}{b_1}|S| + \frac{1}{b_1 - 1}\left(|S| - \frac{1}{b_1}\prod_{i = 1}^t a_ib_i\right)\\
    &> \left( \frac{1}{2b_1 - 1}+  \frac{1}{b_1 - 1}\left(\frac{b_1}{2b_1 - 1} - \frac{1}{b_1}\right)\right) \prod_{i=1}^t a_ib_i\\
    &= \frac{1}{b_1} \prod_{i = 1}^t a_ib_i.
\end{align*}
We conclude that $S$ is not irredundant.
\end{proof}

\section{Further Directions}

In Section \ref{Dom Sets}, we raise the question of whether there exist infinitely many integers $n$ such that $\gamma_c(X_n) > g(n)$ and, if so, whether such integers can have arbitrarily many distinct prime factors.  At the end of this same section, we ask whether there exists a single integer $n$ such that $\gamma_t(X_n) \leq g(n) - 3$.  

We calculate the irredundance and lower independence numbers of direct products of at most three complete graphs in Section \ref{Low Dom}.  It remains open to determine these parameters for larger products of complete graphs.  From these calculations, it follows that $\ir(X_n) = i(X_n)$ when $n$ is prime, $n = 2p$, or $n = 3p$ for some prime $p$, or $n$ is squarefree with exactly three prime divisors.  We pose the problem of finding other squarefree integers $n$ for which equality is achieved in the lower portion of the domination chain.

We note that the irredundance, domination, and lower independence numbers of $K_a \times K_b$ depend on $\min(a,b)$, whereas in the case of $K_a \times K_b \times K_c$ these parameters are independent of $a$, $b$, and $c$.  We show that the domination number of a direct product of four complete graphs is dependent on the size of all four graphs in the product.  We pose the question of determining for which integers $t$ do the irredundance, domination, or lower independence numbers of $\prod_{i=1}^t K_{n_i}$ depend on all of $n_1,n_2,\dots,n_t$, where $n_1 \leq n_2 \leq \cdots \leq n_t$.

As discussed in Section \ref{Upp Dom}, the methods of Defant and Iyer in \cite{DI} for calculating the upper domination number of products of balanced, complete multipartite graphs are easily adapted for calculating the upper irredundance number of these graphs. This suggests a strengthening of their conjecture.

\begin{conj}
If $G = \prod_{i=1}^t K[a_i,b_i]$ with $2 \leq b_1 \leq \cdots \leq b_t$, then $\IR(G) = \alpha(G)$.
\end{conj}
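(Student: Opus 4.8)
The inequality $\alpha(G) \le \IR(G)$ is part of the domination chain, so the entire content is the reverse bound $\IR(G) \le \alpha(G)$; equivalently, every irredundant set $S$ satisfies $|S| \le \alpha(G) = \frac{1}{b_1}\prod_{i=1}^t a_i b_i$. The plan is to refine the lonely/social decomposition underlying Theorem \ref{up dom bd 1} so as to remove its error term $\frac{2}{b_t}\prod_{i=1}^t b_i$ entirely, exploiting the fact that $V(G)$ partitions into $b_1$ independent fibers, each of size exactly $\alpha(G)$: the fiber $\pi^{-1}(r)$ of the projection $\pi\colon v \mapsto v(1) \bmod b_1$ is independent, since vertices agreeing modulo $b_1$ in the first coordinate are never adjacent.

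Fix an irredundant set $S$ and a transversal $T \subseteq \pn[S]$ assigning to each vertex of $S$ exactly one private neighbor. Let $Y = T \cap \pn[\Soc(S)]$ be the chosen private neighbors of the social vertices, and set $W = \Lon(S) \cup Y$. Since lonely vertices lie in $S$, social private neighbors lie outside $S$, and distinct social vertices receive distinct representatives, these pieces are disjoint and $|W| = |\Lon(S)| + |\Soc(S)| = |S|$. The key observation is that $W$ is independent \emph{except possibly within} $Y$: two lonely vertices are nonadjacent because each is isolated in $G[S]$; a lonely vertex $\ell$ and a social representative $u \in \pn[v]$ are nonadjacent because $u$, being a private neighbor of $v$, is nonadjacent to every other vertex of $S$, in particular to $\ell$; thus only two social representatives can be adjacent. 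Hence if $Y$ can be chosen independent, then $W$ is a genuine independent set and $|S| = |W| \le \alpha(G)$, which is exactly the conjectured bound.

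This reduces the conjecture to a selection problem: does every irredundant set admit, for its social vertices, a system of private-neighbor representatives that is pairwise nonadjacent? Lemmas \ref{same assoc vector} and \ref{same last coord} already constrain any such system, since the representatives have pairwise distinct vectors $p(u)$ and no three have $p$-vectors agreeing in all but one coordinate, so $\{p(u): u \in Y\}$ injects into the grid $\prod_{i=1}^t \Z/b_i\Z$ with at most two points on each axis-parallel line. Two representatives are adjacent precisely when their $p$-vectors differ in every coordinate, so the task is to pick one private neighbor per social vertex avoiding all such ``full-difference'' conflicts. I would model this as finding an independent transversal in the conflict graph on $\bigsqcup_{v \in \Soc(S)} \pn[v]$ and attempt to verify a Haxell-type condition: using the at-most-two-per-line sparsity to bound, for a fixed candidate representative, how many private neighbors of \emph{other} social vertices it can conflict with, one aims to show that each social class has more admissible choices than its neighbors can block. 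The base cases $b_1 = 2$ and $t \le 3$ are supplied by Theorem \ref{DI upper}, suggesting an induction on $t$ (or on $b_1$) in which the large-$b_t$ direction furnishes the room needed for the transversal.

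The main obstacle is precisely the existence of this independent transversal. A social vertex may possess only a single private neighbor, so some classes offer no freedom at all, and if such a forced representative is adjacent to another forced representative the reduction collapses. The honest difficulty is therefore to establish a dichotomy: either the representatives can be chosen independently, giving $|S| \le \alpha(G)$ at once, or the rigidity forcing conflicts simultaneously forces $|\Lon(S)|$ below $\alpha(G)$ by at least $|\Soc(S)|$, a genuine trade-off that the separate estimates $|\Lon(S)| \le \alpha(G)$ and $|\Soc(S)| \le \frac{2}{b_t}\prod_{i=1}^t b_i$ fail to capture. Quantifying this trade-off, most plausibly by charging each conflicting social representative to a distinct vertex of its ambient fiber that no lonely vertex can occupy, is where the real work lies, and is the reason the statement remains a conjecture.
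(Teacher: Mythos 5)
This statement is a \emph{conjecture} in the paper: no proof exists there, and the paper's actual results toward it are Theorem \ref{DI upper} (the cases $b_1 = 2$ or $t \leq 3$, due to Defant and Iyer), the bound $\IR(G) \leq \alpha(G) + \frac{2}{b_t}\prod_{i=1}^t b_i$ of Theorem \ref{up dom bd 1}, and the bound $\IR(G) \leq \frac{b_1}{2b_1-1}\prod_{i=1}^t a_ib_i$. Your reduction is correct as far as it goes, and it is essentially a sharpened reading of the paper's own proof of Theorem \ref{up dom bd 1}: since private-neighbor sets of distinct vertices of $S$ are pairwise disjoint, $W = \Lon(S) \cup Y$ has size $|S|$; lonely vertices are pairwise nonadjacent; and a representative $u \in \pn[v;S]$ of a social vertex $v$ satisfies $u \notin N[S \cut \{v\}]$, hence is nonadjacent to every lonely vertex. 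The paper stops exactly where you stop, bounding $|\Soc(S)|$ crudely via Lemmas \ref{same assoc vector} and \ref{same last coord} rather than attempting an independent system of representatives, which is why its error term $\frac{2}{b_t}\prod_{i=1}^t b_i$ survives.

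The genuine gap is the one you name: the independent transversal. But be concrete about why your proposed route would stall. A Haxell-type criterion requires every class to be large relative to the maximum degree of the conflict graph, whereas irredundance guarantees a social vertex only \emph{one} private neighbor, so classes of size $1$ are unavoidable and no classwise counting hypothesis can be verified; the forced-representative collisions you worry about are not an edge case but the default situation the hypothesis must handle. Moreover, the ``large-$b_t$ room'' you invoke for the induction degenerates precisely in the case the paper singles out as hardest, products of triangles ($a_i = 1$, $b_i = 3$), where $\frac{2}{b_t}\prod_{i=1}^t b_i = 2 \cdot 3^{t-1}$ is twice $\alpha(G) = 3^{t-1}$ and Theorem \ref{up dom bd 1} is trivial. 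Your dichotomy --- conflicts among representatives should force a compensating deficit $|\Lon(S)| \leq \alpha(G) - |\Soc(S)|$ --- has the right shape, but nothing in the proposal produces the required injection from conflicting social representatives into fiber vertices unavailable to $\Lon(S)$, and that charging argument is the entire open content. In short: the decomposition and the reduction are sound and slightly stronger than what the paper records, but the proposal is, as you acknowledge, a program rather than a proof, consistent with the statement's status as a conjecture.
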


In 2007, Klotz and Sander \cite{KS} introduced the notion of the {\it gcd-graph} $X_n(D)$, the graph on $\{0,\dots,n-1\}$ where vertices $x$ and $y$ are adjacent if and only if $\gcd(|x-y|,n) \in D \subseteq \N$.  In particular, Klotz and Sander show that all eigenvalues of $X_n(D)$ are integral.  It may be interesting to investigate domination parameters in the more general case of gcd-graphs.

\section{Acknowledgements}
The author would like to thank Joe Gallian for his tireless efforts to foster a productive and engaging mathematical community.  She extends her gratitude to Colin Defant for introducing her to this problem and providing incredible support throughout her research.  This research was conducted at the University of Minnesota, Duluth  REU  and  was  supported  by  NSF/DMS  grant  1650947  and  NSA grant H98230-18-1-0010.

\end{document}